\definecolor{green}{HTML}{2ECC71}
\definecolor{blue}{HTML}{3498DB}
\definecolor{red}{HTML}{E74C3C}
\definecolor{orange}{HTML}{FD6A02}
\def\@endtheorem{\endtrivlist}
\Crefname{paragraph}{\S}{\SS}
\Crefname{equation}{}{}
\Crefname{enumi}{}{}
\Crefname{conditioni}{Condition}{Conditions}
\Crefname{conditionalti}{Condition}{Conditions}
\newtheorem{theorem}{Theorem}[section]
\newtheorem*{theorem*}{Theorem}
\Crefname{theorem}{Theorem}{Theorems}
\Crefname{theoremintro}{Theorem}{Theorems}
\newtheorem{lemma}[theorem]{Lemma}
\Crefname{lemma}{Lemma}{Lemmas}
\newtheorem{proposition}[theorem]{Proposition}
\Crefname{proposition}{Proposition}{Propositions}
\Crefname{corollary}{Corollary}{Corollaries}
\Crefname{conjecture}{Conjecture}{Conjectures}
\theoremstyle{definition}
\Crefname{example}{Example}{Examples}
\newtheorem*{example*}{Example}
\Crefname{assumption}{Assumption}{Assumptions}
\Crefname{definition}{Definition}{Definitions}
\Crefname{question}{Question}{Questions}
\theoremstyle{remark}
\newtheorem{remark}[theorem]{Remark}
\Crefname{remark}{Remark}{Remarks}
\numberwithin{equation}{section} %
\DeclarePairedDelimiter{\paren}{\lparen}{\rparen}
\DeclarePairedDelimiter{\bracket}{\lbrack}{\rbrack}
\DeclarePairedDelimiter{\set}{\lbrace}{\rbrace}
\DeclarePairedDelimiter{\abs}{\lvert}{\rvert}
\DeclarePairedDelimiter{\norm}{\lVert}{\rVert}
\DeclarePairedDelimiterX{\psh}[2]{\langle}{\rangle}{#1, #2}
\DeclarePairedDelimiterX{\pairing}[2]{\langle}{\rangle}{#1 \vert #2}
\DeclarePairedDelimiterXPP{\Exp}[1]{\exp}{\lparen}{\rparen}{}{#1}
\DeclarePairedDelimiterXPP{\Log}[1]{\log}{\lparen}{\rparen}{}{#1}
\DeclarePairedDelimiterXPP{\Inf}[1]{\inf}{\lbrace}{\rbrace}{}{#1}
\DeclarePairedDelimiterXPP{\Sup}[1]{\sup}{\lbrace}{\rbrace}{}{#1}
\DeclarePairedDelimiterXPP{\Max}[1]{\max}{\lbrace}{\rbrace}{}{#1}
\DeclarePairedDelimiterXPP{\Min}[1]{\min}{\lbrace}{\rbrace}{}{#1}
\DeclareMathOperator{\esp}{\mathbf{E}}
\DeclareMathOperator{\prob}{\mathbf{P}}
\DeclareMathOperator{\var}{\mathbf{Var}}
\DeclareMathOperator{\law}{\mathbf{law}}
\newcommand{\given}[1][]{%
  \nonscript\:#1\vert
  \allowbreak
  \nonscript\:
\mathopen{}}
\DeclarePairedDelimiterXPP{\Prob}[1]{\prob}[]{}{#1}
\DeclarePairedDelimiterXPP{\Esp}[1]{\esp}[]{}{#1}
\DeclarePairedDelimiterXPP{\Var}[1]{\var}[]{}{#1}
\DeclarePairedDelimiterXPP{\Law}[1]{\law}[]{}{#1}
\author[R.\ Herry]{Ronan HERRY}
\address{IRMAR, Université de Rennes 1}
\email{ronan.herry@univ-rennes.fr}
\urladdr{https://orcid.org/0000-0001-6313-1372}
\author[B.\ Huguet]{Baptiste HUGUET}
\address{IRMAR, Université de Rennes 1}
\email{Baptiste.Huguet@math.cnrs.fr}
\urladdr{https://orcid.org/0000-0003-3211-3387}
\title[Brenier--Schrödinger with respect to Feller semimartingales]{The Brenier--Schrödinger problem with respect to Feller semimartingales and non-local Hamilton--Jacobi--Bellman equations}
\begin{document}

\maketitle
\begin{abstract}
  Motivated by a problem from incompressible fluid mechanics of Brenier \cite{Brenier}, and its recent entropic relaxation by \cite{ACLZ}, we study a problem of entropic minimization on the path space when the reference measure is a generic Feller semimartingale.
  We show that, under some regularity condition, our problem connects naturally with a, possibly non-local, version of the Hamilton--Jacobi--Bellman equation.
  Additionally, we study existence of minimizers when the reference measure in a Ornstein--Uhlenbeck process.
\end{abstract}

\setcounter{tocdepth}{1}
\tableofcontents

\section{Introduction}

\subsection{Main result}
We study the so-called \emph{Brenier--Schrödinger problem} with respect to a reference measure $\mathsf{R}$, that is the law of a Feller martingale on $[0,1]$.
This problem consists in minimizing the relative entropy of the law $\mathsf{P}$ of another process $Y$ under the marginal constraints:
\begin{enumerate}[(i)]
  \item $Y_{t} \sim \mu_{t}$, for all $t \in [0,1]$, where $(\mu_{t})$ is the data of a family of probability measure.
  \item $(Y_{0}, Y_{1}) \sim \pi$, where $\pi$ is the data of a coupling of the endpoints.
\end{enumerate}
By definition of the relative entropy, every minimizer is absolutely continuous with respect to $\mathsf{R}$.
It particular, $\mathsf{P}$ is also the law of a semimartingale, and the \emph{log-density process $Z$} is a semimartingale.
Actually, conditionally on $\{X_{0} = x\}$, $Z$ is of the form
\begin{equation*}
  Z_{t} = A_{t^{-}} + \psi_{t}^{x}(X_{t}),
\end{equation*}
where $A$ is an additive functional, and $(t, z) \mapsto \psi_{t}^{x}(z)$ is some function.
We say that the minimizer $\mathsf{P}$ is \emph{regular} provided the associated $\psi^{x}$ is $\mathscr{C}^{1}$ in time, $\mathscr{C}^{2}$ in space, and the semimartingale $\psi^{x}_{\cdot}(X)$ is \emph{special} (see definitions below).
We recall that a semimartingale is special as soon as it has bounded jumps.
In particular, every continuous semimartingale is special.
Our main result is as follows.
\begin{theorem*}
  Consider a regular solution $\mathsf{P}$ of the aforementioned problem, and the associated $\psi^{x}$.
  Then, the additive functional $A$ is absolutely continuous.
  In particular, there exists a function $p \colon [0,1] \times \mathbb{R}^{n} \to \mathbb{R}$ such that
  \begin{equation}\label{eq:existence-pressure}
    A_{t} = \int_{0}^{t} p_{t}(X_{s}) \mathtt{d} s, \qquad t \in [0,1].
  \end{equation}
Moreover, $\psi^{x}$ is a solution to the \emph{generalized Hamilton--Jacobi--Bellman} equation
\begin{equation}\label{eq:hjb}
  \partial_{t} \psi^{x} + \mathrm{e}^{-\psi^x_t}\mathbf{A}\left(\mathrm{e}^{\psi^x_t}\right) + p_{s}=0, 
\end{equation}
where $\mathbf{A}$ denotes the Markov generator of $\mathsf{R}$.
\end{theorem*}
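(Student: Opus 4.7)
The plan is to exploit the fact that the exponential density process $M_{t} := \exp(Z_{t})$ must be a strictly positive local martingale under the reference measure $\mathsf{R}$, since it is (a regular version of) the Radon--Nikodym derivative $\mathtt{d} \mathsf{P}\vert_{\mathcal{F}_{t}} / \mathtt{d} \mathsf{R}\vert_{\mathcal{F}_{t}}$. The strategy is to write down the canonical semimartingale decomposition of $M$ under $\mathsf{R}$ using the structure $Z_{t} = A_{t^{-}} + \psi_{t}^{x}(X_{t})$, and then to exploit the fact that the predictable finite-variation part of $M$ must vanish identically. This single identity will simultaneously force the absolute continuity of $A$ and produce the HJB equation.

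The key computation starts with Dynkin's formula for the Feller process $X$ applied to the time-dependent function $(s,y) \mapsto \exp(\psi_{s}^{x}(y))$, which belongs to a suitable domain of the extended generator $\mathbf{A}$ thanks to the $\mathscr{C}^{1}$-in-time and $\mathscr{C}^{2}$-in-space regularity of $\psi^{x}$. Setting $L_{t} := \exp(\psi_{t}^{x}(X_{t}))$, this yields
\begin{equation*}
  \mathtt{d} L_{s} = L_{s}\bigl[(\partial_{s} \psi_{s}^{x})(X_{s}) + \mathrm{e}^{-\psi_{s}^{x}(X_{s})}\mathbf{A}(\mathrm{e}^{\psi_{s}^{x}})(X_{s})\bigr]\,\mathtt{d} s + \mathtt{d} N_{s},
\end{equation*}
where $N$ is a local martingale under $\mathsf{R}$. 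Since $\psi_{\cdot}^{x}(X)$ is \emph{special} and $A$ is predictable of finite variation, integration by parts applied to $M_{t} = \mathrm{e}^{A_{t^{-}}} L_{t}$ exhibits the predictable finite-variation part of $M$ as
\begin{equation*}
  \int_{0}^{t} M_{s^{-}}\bigl[(\partial_{s} \psi_{s}^{x})(X_{s}) + \mathrm{e}^{-\psi_{s}^{x}(X_{s})}\mathbf{A}(\mathrm{e}^{\psi_{s}^{x}})(X_{s})\bigr]\,\mathtt{d} s + \int_{0}^{t} M_{s^{-}}\,\mathtt{d} A_{s}.
\end{equation*}

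Setting this quantity to zero and dividing by the strictly positive $M_{-}$ yields both conclusions at once. The first summand is absolutely continuous with respect to Lebesgue measure, whereas $\mathtt{d} A$ could a priori have a singular component; identifying the Lebesgue decompositions term by term forces $\mathtt{d} A$ to be absolutely continuous, and the additive-functional structure of $A$ then promotes its Lebesgue density to a function of $(t, X_{t})$, giving \eqref{eq:existence-pressure}. Matching the absolutely continuous densities yields the HJB equation \eqref{eq:hjb} directly. The main obstacle will be to justify Itô/Dynkin's formula rigorously for the possibly non-local generator $\mathbf{A}$ on functions that are only $\mathscr{C}^{1,2}$ — in particular, controlling the compensator of the large jumps of $X$ appearing in $\mathbf{A}(\mathrm{e}^{\psi})$ — and to track jump corrections in the product rule so that the two predictable finite-variation contributions identified above exhaust the predictable drift of $M$; the \emph{special} hypothesis on $\psi_{\cdot}^{x}(X)$, which secures the uniqueness and good behavior of the canonical decomposition, is what makes this machinery go through.
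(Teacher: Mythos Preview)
Your approach is correct in spirit and genuinely different from the paper's. The paper works under $\mathsf{P}^{x}$ and obtains \emph{two} special-semimartingale decompositions of the log-density $Z^{x}$: one by applying It\^o's formula to $\psi^{x}(X)$ with the $\mathsf{P}^{x}$-characteristics $(B^{h}+\tilde{B}^{x},C,\ell^{x}\nu)$, the other by invoking L\'eonard's explicit Girsanov representation of $Z^{x}$ in terms of $(\beta^{x},\ell^{x})$. Identifying the two decompositions first gives $\beta^{x}=\nabla\psi^{x}(X_{-})$ and $\log\ell^{x}=J^{x}$ from the martingale parts, and then the predictable parts yield the absolute continuity of $A$ and the HJB equation. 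Your route bypasses Girsanov entirely: you stay under $\mathsf{R}^{x}$, exploit only that $M=\mathrm{e}^{Z}$ is a positive $\mathsf{R}^{x}$-local martingale, and read off the drift of $M$ via Dynkin applied to $\mathrm{e}^{\psi^{x}}$ together with a product rule. This is more economical and explains transparently why the nonlinear term $\mathrm{e}^{-\psi}\mathbf{A}\mathrm{e}^{\psi}$ appears. What the paper's approach buys in return is the explicit identification of the Girsanov data $(\beta^{x},\ell^{x})$ in terms of $\psi^{x}$, i.e.\ the characteristics of $\mathsf{P}^{x}$, which your argument does not produce.

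Two technical points deserve care. First, the paper's ``special'' hypothesis is on $\psi^{x}_{\cdot}(X)$ and is used under $\mathsf{P}^{x}$; your Dynkin step needs instead that $L=\mathrm{e}^{\psi^{x}(X)}$ (or $M$) be special under $\mathsf{R}^{x}$, which is not literally the same condition. You can recover this by noting that $M$, being a positive local martingale, is automatically special under $\mathsf{R}^{x}$, and that the regularity assumption forces $\mathsf{P}^{x}\sim\mathsf{R}^{x}$; still, you should justify that the compensator of the large jumps of $\mathrm{e}^{\psi^{x}(X)}$ is the $\nu$-integral implicit in $\mathbf{A}\mathrm{e}^{\psi^{x}}$. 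Second, your product rule with the c\`agl\`ad factor $\mathrm{e}^{A_{t^{-}}}$ is cleanest once you observe that $A$ is in fact \emph{continuous} a priori: since $Z$ is c\`adl\`ag (log of a positive c\`adl\`ag martingale) and $\psi^{x}(X)$ is c\`adl\`ag, $A_{t^{-}}=Z_{t}-\psi^{x}_{t}(X_{t})$ is both c\`adl\`ag and c\`agl\`ad, hence continuous. With that in hand, no jump covariation arises and your displayed predictable drift of $M$ is exact.
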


\begin{remark}
  Let us make some comments on the result:
  \begin{enumerate}[(a),wide,nosep]
    \item 
      When $\mathsf{R}$ is a Markov diffusion, then $\mathbf{A}u(x) = \frac{1}{2} c(x) \cdot \nabla^{2} u(x) + b \cdot \nabla u(x)$, for some diffusion matrix $c$ and drift vector $b$.
      By the chain rule \cref{eq:hjb} becomes
      \begin{equation*}
        \partial_{t} \psi^{x}_{t} + \mathbf{A} \psi^{x} + \frac{1}{2} \nabla \psi^{x}_{t} \cdot c \nabla \psi^{x}_{t} + p_{t} = 0,
      \end{equation*}
      which is the usual Hamilton--Jacobi--Bellman equation with pressure.
    \item The function $p$ is interpreted as a pressure field.
    \item In the classical Hamilton--Jacobi--Bellman equation, the term $\mathbf{A}$ would have a negative sign. This phenomenon also has been observed in \cite{ACLZ}. It can be circumvent by considering the potential $\psi$ associated to the time-reversed measure.
  \end{enumerate}
\end{remark}

Verifying that there exist regular minimizers is an arduous task, that, in the Brownian case amounts to solve the Navier--Stokes equation (see \cite{ACLZ} for details).
We do not take on such an accomplishment.
More modestly, additionally to our main theorem, we show that there exists a minimizer, possibly non-regular, when $\mathsf{R}$ is the law of an Ornstein--Uhlenbeck process.

\subsection{Motivations and connections to the literature}
Understanding what happens to the Brenier--Schrödinger problem for general semimartingales, possibly with a jump part is the main motivation for this paper.
The Brenier--Schrödinger problem, defined in \cite{ACLZ}, is a relaxation of Brenier's approach \cite{Brenier} to incompressible perfect fluids and Euler equations.
This generalization, which can be seen as the entropic relaxation of Brenier original problem, aims at modelling viscous fluid  dynamics.
The achievements of \cite{ACLZ} are threefold.
\begin{enumerate}[(i)]
  \item When the reference measure $\mathsf{R}$ is \emph{Markovian}, they study the general shape of minimizers.
  \item  Whenever the reference measure $\mathsf{R}$ is the law of a reversible Brownian motion on $\mathbb{R}^{n}$ or $(\mathbb{R} \setminus \mathbb{Z})^{n}$, they show that minimizers of a certain form give rise to solution to the Navier--Stokes equation.
  \item On the torus, they show that minimizers always exists, whenever $\mu_{t} = \mathsf{vol}$ for all $t$, and the coupling $\pi$ has finite entropy relatively to the $\mathsf{R}_{01}$.
\end{enumerate}
The two last points are extended to the compact manifold setting in \cite{GarciaZeladaHuguet}.
Let us stress that the particular form of the minimizer needed in \cite{ACLZ} is stronger than our regularity condition.
Indeed, they need to assume the existence of the function $p$ such that \cref{eq:existence-pressure} holds, while \cref{eq:existence-pressure} follows from our analysis.
Thus, even in the purely continuous case, our result is less restrictive  than that of \cite{ACLZ}.

In the Brownian case, when the noise tends to $0$, one recovers in the limit Brenier's original problem of minimization of the kinetic energy, as illustrated in \cite{BaradatMonsaingeon}.
In our possibly non-local setting, understanding the appropriate notion of small noise limit, and whether there exists a equivalent of Brenier's problem is an interesting question that could be explored in future works.

\subsection{Outline of the proof}
Our proof follows the lines of \cite{ACLZ}.
We use, on the one hand, Itō's formula, and on the other hand, Girsanov theorem to give two different representations of the semimartingale $Z$.
At the technical level, since $Z$ is a special semimartingale, it thus admits a unique decomposition as a sum of a local martingale, and predictable process of finite variations. 
This allows us to identify the two different decompositions and conclude.
In the continuous case, as for the Brownian setting of \cite{ACLZ}, the semimartingale is always special and this assumption is unnecessary.

\section{Reminders and notation}

\subsection{Semimartingales and their characteristics}

We refer to \cite{JacShi} for more details.
\subsubsection{Path space}

Let $\Omega$ denote the space of right-continuous with left-limit paths from $[0,1]$ to $\mathbb{R}^n$.
The \emph{canonical process} on $\Omega$ is denoted by $X$, that is
\begin{equation*}
  X_{t}(\omega) \coloneq \omega_{t}, \qquad \omega \in \Omega, t \in [0,1].
\end{equation*}
The associated \emph{canonical filtration} defined by
\begin{equation*}
  \mathfrak{F}_{t} \coloneq \bigcap_{s > t} \sigma(X_{r} : 0 \leq r \leq s), \qquad t \in [0,1].
\end{equation*}
Conveniently for $\mathsf{P} \in \mathscr{P}(\Omega)$, we write $\mathsf{P}_{t}$ for the law of $X_{t}$ under $\mathsf{P}$, $\mathsf{P}_{ts}$ for the law of $(X_{t}, X_{s})$ under $\mathsf{P}$, and so on.

\subsubsection{Processes}
An \emph{adapted process} is a mapping $Y \colon \Omega \to \Omega$ such that $Y_{t}$ is $\mathfrak{F}_{t}$-measurable for all $t \in [0,1]$.
By definition, all our processes are right-continuous with left-limit.
For such $Y$, its \emph{jump process} is
\begin{equation*}
  \Delta Y_{t} \coloneq Y_{t} - Y_{t-}, \qquad t \in [0,1].
\end{equation*}
We say that $Y$ is \emph{predictable} whenever seen as the mapping
\begin{equation*}
  [0,1] \times \Omega \ni (t,\omega) \mapsto Y_{t}(w),
\end{equation*}
it is measurable with respect to the $\sigma$-algebra generated by $F \times (t,s]$ for $s < t$ and $F \in \mathfrak{F}_{s}$.
A process of \emph{finite variation} is an adapted process $A = B - C$ with $B$ and $C$ two non decreasing adapted processes.
In that case the \emph{variation} of $A$ is the process $B + C$.
We also consider \emph{generalized adapted} (resp.\ \emph{predictable}) processes of the form $W \colon \Omega \times \mathbb{R}^{n} \to \Omega$.

\subsubsection{Martingales \& semimartingales}
Fix $\mathsf{P} \in \mathscr{P}(\Omega)$.
A \emph{martingale} (or a \emph{$\mathsf{P}$-martingale} to emphasize the dependence on $\mathsf{P}$) is an adapted process such that:
\begin{equation*}
  \esp_{\mathsf{P}} \bracket{ M_{t} \given \mathfrak{F}_{s} } = M_{s}, \quad 0 \leq s \leq t \leq 1.
\end{equation*}
We say that $X$ is a \emph{semimartingale} under $\mathsf{P}$ provided
\begin{equation}\label{eq:semimartingale}
  X = X_{0} + M + A,
\end{equation}
where $M$ is a $\mathsf{P}$-martingale, and $A$ is a finite variation process ($\mathsf{P}$-almost surely).
The decomposition \cref{eq:semimartingale} is not unique.

\subsubsection{Special semimartingales}
If $A$ can be chosen predictable in \cref{eq:semimartingale}, we say that the semimartingale is \emph{special}.
By \cite[I.4.23-24]{JacShi}, a semimartingale is special if and only if, in \cref{eq:semimartingale}, $A$ can be chosen with $\mathsf{P}$-integrable variation, and that a martingale is special as soon as its jump are bounded.

\subsubsection{Canonical jumps measure}

We consider the \emph{canonical jumps process} and the \emph{canonical jumps measure}
\begin{align*}
  & \Delta X_{s} \coloneq X_{s} - X_{s-}, \quad s \in [0,1];
\\& \mu_{t} \coloneq \sum_{s \leq t} \delta_{(s, \Delta X_{s})} \mathbf{1}_{\Delta X_{s} \neq 0}.
\end{align*}
The measure $\mu$ is a random element of $\mathscr{M}([0,1] \times \mathbb{R}^{d})$, the set of Borel measures on $[0,1] \times \mathbb{R}^{d}$.
For a, possibly random, measure $\lambda \in \mathscr{M}([0,1] \times \mathbb{R}^{n})$, and a generalized process $W$, we write
\begin{equation*}
  (W * \lambda)_{t} \coloneq \int_{0}^{t} \int_{\mathbb{R}^{n}} W_{s}(y) \lambda(\mathtt{d} s\mathtt{d}y), \qquad t \in [0,1].
\end{equation*}

\subsubsection{Compensator of the jumps measure}
For all $\mathsf{P} \in \mathscr{P}(\Omega)$, there exists a unique predictable measure $\nu$ such that
\begin{equation*}
  \bracket*{ \esp_{\mathsf{P}} \bracket*{ \abs{W} * \mu } < \infty } \Rightarrow \bracket*{ \esp_{\mathsf{P}} \bracket*{ \abs{W} * \nu } < \infty, \ \text{and}\ W * \mu - W * \nu \ \text{is a $\mathsf{P}$-martingale}}.
\end{equation*}
We call $\nu$ the compensator of $\mu$ (under $\mathsf{P}$).

\subsubsection{Characteristics of a semimartingale}
We fix $h(y) \coloneq y \mathbf{1}_{\abs{y} \leq 1}$.
The \emph{big-jumps removed} version of $X$ is
\begin{equation*}
  X^{h} \coloneq X - (y-h) * \mu.
\end{equation*}
If $X$ is a semimartingale under $\mathsf{P}$, then $X^{h}$ is a special semimartingale.
Thus, there exists a unique decomposition
\begin{equation*}
  X^{h} = X_{0} + M^{h} + B^{h},
\end{equation*}
with $M^{h}$ a martingale, and $B^{h}$ predictable.
We call $(B^{h}, C, \nu)$ the \emph{characteristics} of the semimartingale, where $C$ is the quadratic variation of the continuous part of $X$, and $\nu$ the compensator.
\begin{remark}
In general, it is not known whether a semimartingale with prescribed characteristics exists, nor if the characteristics characterise $\mathsf{P}$.
\end{remark}

\subsubsection{Representation of semimartingales with given characteristics}
Let $\mathsf{R}$ be a semimartingale with characteristics $(B^{h}, C, \nu)$.
Then, by \cite[Thm.\ II.2.34]{JacShi}, we have
\begin{equation*}
  X = X_{0} + X^{c} + h \ast (\mu^{X} - \nu) + (y - h) \ast \mu^{X} + B^{h}.
\end{equation*}

\subsection{Markov processes and related concepts}

\subsubsection{Shift semi-group}
We consider the \emph{shift semi-group}
\begin{equation*}
  Y_{s} \circ \theta_{t} = Y_{t+s}, \qquad t,s \in [0,1].
\end{equation*}
\subsubsection{Time reversal}
We consider the \emph{canonical time-reversed} process
\begin{equation*}
  X^{*}_{t} \coloneq X_{1-t}, \qquad t \in [0,1].
\end{equation*}
We write $\mathfrak{F}^{*}$ for the associated filtration.
\subsubsection{Markov process}
A process $\mathsf{P} \in \mathscr{P}(\Omega)$ is \emph{Markov} provided,
\begin{equation*}
  \mathsf{P}(X_{t+s} \in B \given \mathfrak{F}_{t}) = \mathsf{P}(X_{s} \in B \given X_{t}).
\end{equation*}
We shall use the following less common alternative characterization.
The process $\mathsf{P} \in $ is Markov if and only if
\begin{equation*}
  \mathsf{P}(C \cap C^{*} \given X_{t}) = \mathsf{P}(C \given X_{t}) \mathsf{P}(C^{*} \given X_{t}), \qquad t \in [0,1],\, C \in \mathfrak{F}_{t},\, C^{*} \in \mathfrak{F}^{*}_{t}.
\end{equation*}

\subsubsection{Additive functionals}
An adapted process $A$ is an \emph{additive functional} provided $A_{0} = 0$ and
\begin{equation*}
  A_{t+s} = A_{s} + A_{t} \circ \theta_{s}.
\end{equation*}
We identify every additive functional with the random additive set function by letting for $s < t$:
\begin{align*}
  & A_{[s,t]} \coloneq A_{t-s} \circ \theta_{s},
\\& A_{[s,t)} \coloneq A_{(t-s)^{-}} \circ \theta_{s},
\\& A_{(s,t]} \coloneq A_{(t-s)} \circ \theta_{s^{-}},
\\& A_{(s,t)} \coloneq A_{(t-s)^{-}} \circ \theta_{s^{-}}.
\end{align*}

\subsubsection{Reciprocal measure}
We say that $\mathsf{P}$ is \emph{reciprocal} provided
\begin{equation*}
  \mathsf{P}(X_{s} \in B \given \mathfrak{F}_{r}, \mathfrak{F}^{*}_{t}) = \mathsf{P}(X_{s} \in B \given X_{r}, X_{t}), \qquad r \leq s \leq t.
\end{equation*}
As for Markov processes, we repeatedly use the alternative characterization: $\mathsf{P}$ is reciprocal if and only if
\begin{equation*}
  \begin{split}
    & 0 \leq s \leq t \leq 1,\, C \in \mathfrak{F}_{s},\, C^{*} \in \mathfrak{F}^{*}_{t},\, A \in \sigma(X_{u} : s \leq u \leq t) \Rightarrow
  \\& \mathsf{P}(C \cap A \cap C^{*} \given X_{s}, X_{t}) = \mathsf{P}(C \cap A \given X_{s}, X_{t}) \mathsf{P}(C^{*} \cap A \given X_{s}, X_{t}).
  \end{split}
\end{equation*}
Clearly, all Markov processes are also reciprocal.
We have that whenever $\mathsf{P}$ is reciprocal, then $\mathsf{P}^{x}$ is Markov.

\subsection{Feller processes and their generators}
We follow \cite{BSW}.

\subsubsection{Feller semi-groups}
A \emph{Feller semi-group} is a Markov semi-group $(\mathbf{T}_{t})$ such that
\begin{equation*}
\norm{\mathbf{T}_{t} u - u}_{\infty} \xrightarrow[t \to 0]{} 0, \qquad u \in \mathscr{C}_{0}(\mathbb{R}^{d}).
\end{equation*}

\subsubsection{Infinitesimal generator}
The \emph{infinitesimal generator} of a Feller semi-group $(\mathbf{T}_{t})$ is the unique unbounded operator $\mathbf{A}$ with
\begin{align*}
  & \mathscr{D}(\mathbf{A}) \coloneq \set*{ u \in \mathscr{C}_{0}(\mathbb{R}^{d}) : \lim_{t \to 0} \frac{\mathbf{T}_{t}u -u}{t} \ \text{exists} },
\\& \mathbf{A} u \coloneq \lim_{t \to 0} \frac{\mathbf{T}_{t}u -u}{t}.
\end{align*}
The generator $\mathbf{A}$ is densely-defined and closed.

\subsubsection{Feller processes and martingales}
Let $\mathsf{P} \in \mathscr{P}(\Omega)$ be the law of a Feller process associated with the Feller semi-group $(\mathbf{T}_{t})$, that is
\begin{equation*}
  \mathbf{T}_{t}u(x) = \esp_{\mathsf{P}} \bracket*{ u(X_{t}) \given X_{0} = x }, \qquad x \in \mathbb{R}^{d},\, t \in [0,1].
\end{equation*}
Then, for all $f \in \mathscr{D}(\mathbf{A})$, the process
\begin{equation*}
  M_{t} \coloneq f(X_{t}) - f(X_{0}) - \int_{0}^{t} \mathbf{A}f(X_{s}) \mathtt{d} s, \qquad t \in [0,1],
\end{equation*}
is a $\mathsf{P}$-martingale.

\subsubsection{Lévy triplet of a Feller process}
In the rest of the paper, we always assume that $\mathscr{D}(\mathbf{A})$ contains the smooth functions $\mathscr{C}^{\infty}_{c}(\mathbb{R}^{d})$. 
In this case, by \cite{Courrege}, there exist:
\begin{enumerate}[(i)]
  \item $\alpha \colon \mathbb{R}^{d} \to \mathbb{R}_{+}$;
  \item $b^{h} \colon \mathbb{R}^{d} \to \mathbb{R}^{d}$;
  \item $c \colon \mathbb{R}^{d} \to \mathbb{R}^{d \times d}$ symmetric non-negative;
  \item $N \colon \mathbb{R}^{d} \to \mathscr{M}(\mathbb{R}^{d} \setminus \{0\})$ with $\int_{\mathbb{R}^{d}} (\abs{y}^{2} \wedge 1) N(x, \mathtt{d} y) < \infty$;
\end{enumerate}
such that
\begin{equation}\begin{split}
  \label{eq:generator-feller}
  \mathbf{A}u(x) &= -\alpha(x) u(x) + b^{h}(x) \cdot \nabla u(x) + \frac{1}{2} \nabla^2u(x) \cdot c(x) 
  \\&+ \int_{\mathbb{R}^{d} \setminus \{0\}} (u(x+y) - u(x) - \nabla u(x) \cdot h(y)) N(x, \mathtt{d} y).
\end{split}
\end{equation}

\subsubsection{Feller processes and semimartingales}
Let $\mathsf{P} \in \mathscr{P}(\Omega)$ be the law of a Feller process with Lévy triplet $(b^{h}, c, N)$.
Then $\mathsf{P}$ is a semimartingale with characteristics
\begin{equation}\label{eq:characteristics-feller}
  \begin{split}
  & B^{h}_{t} = \int_{0}^{t} b^{h}(X_{s}) \mathtt{d} s;
\\& C_{t} = \int_{0}^{t} c(X_{s}) \mathtt{d} s;
\\& \nu(\mathtt{d} s, \mathtt{d}y) = N(X_{s}, \mathtt{d}y) \mathtt{d} s.
  \end{split}
\end{equation}

\subsection{Examples of Feller semi-groups}

\subsubsection{Heat semi-group}
Let $\mathsf{R}$ be the law of the Wiener process on $\mathbb{R}^{d}$.
Consider the associated Feller semi-group:
\begin{equation*}
  \mathbf{T}_{t}u(x) \coloneq \int_{\mathbb{R}^{d}} \mathrm{e}^{-\abs{y-x}^{2}/2t} \frac{\mathtt{d} y}{(2\pi t)^{d/2}}.
\end{equation*}
Then $\mathbf{A} = -\frac{1}{2} \Delta$.
The invariant measure is the Lebesgue measure.

\subsubsection{Poisson semi-group}
Let $\mathsf{R} \in \mathscr{P}(\Omega)$ be the law of the Poisson process on $\mathbb{R}^{d}$ with intensity $\lambda > 0$ and jumping towards $z \in \mathbb{R}^{d}$.
The associated Feller semi-group is
\begin{equation*}
  \mathbf{T}_{t}u(x) \coloneq \sum_{k \in \mathbb{N}} u(x + kz) \frac{(\lambda t)^{j}}{j!} \mathrm{e}^{-t\lambda}.
\end{equation*}
Its generator is $\mathbf{A}u(x) = \lambda(u(x+z) - u(x))$.
The invariant measure is the counting measure.

\subsubsection{Symmetric stable semi-groups}
Let $\mathsf{R}$ be the law of the $\alpha$-stable symmetric, with $\alpha \in (0,2)$.
The associated Feller semi-group is 
\begin{equation*}
  \mathbf{T}_{t}u(x) \coloneq \int u(x+y) p_{\alpha,t}(\mathtt{d} y),
\end{equation*}
where $p_{\alpha,t}$ satisfies
\begin{equation*}
  \widehat{p}_{\alpha,t}(\xi) = \mathrm{e}^{-t \abs{\xi}^{\alpha}}.
\end{equation*}
Then the generator is
\begin{equation*}
  \mathbf{A}u(x) \coloneq k_{\alpha} \int_{\mathbb{R}^{d} \setminus \{0\}} (u(x+y) - u(x) - \nabla u(x) \cdot h(y)) \frac{\mathtt{d} y}{\abs{y}^{\alpha + d}}.
\end{equation*}
 
\subsubsection{Lévy processes}
Let $\mathsf{R}$ be the law of a Lévy process on $\mathbb{R}^{d}$, that is
\begin{equation*}
  \mathbf{T}_{t}u \coloneq u * p_{t},
\end{equation*}
where $(p_{t})$ is a family of infinitely divisible distributions.
In this case the Lévy triplet is independent of $x$, and the generator is
\begin{equation*}
  \mathbf{A}u(x) \coloneq b^{h} \cdot \nabla u(x) + \frac{1}{2} \nabla \cdot (c \nabla u(x)) + \int_{\mathbb{R}^{d} \setminus \{0\}} (u(x+y) - u(x) - \nabla u(x) \cdot h(y)) N(\mathtt{d} y).
\end{equation*}

\subsubsection{Ornstein--Uhlenbeck stable semi-groups}
Let $\mathsf{R}$ be the law of the $\alpha$-stable symmetric process.
The associated Ornstein--Uhlenbeck process $Y$ satisfies, under $\mathsf{R}$
\begin{equation*}
  \mathtt{d} Y_{t} = \mathtt{d} X_{t} - Y_{t} \mathtt{d} t.
\end{equation*}
By solving explicitly this equation, the associated semi-group is
\begin{equation*}
  \mathbf{T}_{t}u(x) \coloneq \esp_{\mathsf{R}} \bracket*{ u\paren*{\mathrm{e}^{t} x + \int_{0}^{t} \mathrm{e}^{s-t} \mathtt{d} X_{s}} \given X_{0} = x}.
\end{equation*}
In this case
\begin{equation*}
  \mathbf{A}u(x) = -(-\Delta)^{\alpha/2}u(x) - x \cdot \nabla u(x).
\end{equation*}
The unique invariant measure is $\mu_{\alpha}$ such that
\begin{equation*}
  \widehat{\mu}_{\alpha}(\xi) = \mathrm{e}^{-\frac{1}{\alpha} \abs{\xi}^{\alpha}}.
\end{equation*}

\subsection{Relative entropy and Girsanov formula}

\subsubsection{Relative entropy}
We fix $\mathsf{R} \in \mathscr{P}(\Omega)$ a semimartingale with characteristics $(B^{h}, C, \nu)$.
The \emph{relative entropy} of with respect to $R$ is the functional
\begin{equation*}
  \mathcal{H}\paren{\mathsf{P} \given \mathsf{R}} \coloneq 
  \begin{cases}
    \esp_{\mathsf{P}} \bracket*{\frac{\mathtt{d} \mathsf{P}}{\mathtt{d} \mathsf{R}}}, & \text{if}\ \mathsf{P} \ll \mathsf{R};
    \\ +\infty, & \text{otherwise}.
  \end{cases}
\end{equation*}
Elements of $\mathscr{P}(\Omega)$ that are absolutely continuous with respect to to a semimartingale are again a semimartingales \cite{JacShi}.
In particular, elements with finite entropy with respect to $\mathsf{R}$ are semimartingales.
\cite{Leonard} characterises their characteristics.

\subsubsection{Girsanov theorem under finite entropy}
We consider the functions $\theta \colon \mathbb{R} \ni u \mapsto \mathrm{e}^u-u-1$, and its convex conjugate
\begin{equation*}
  \theta^{\star} \colon \mathbb{R} \ni v \mapsto
  \begin{cases}
(v+1)\log(v+1) -v, & \text{if} \ v>-1;
\\ 1, & \text{if} \ v=-1;
\\ +\infty, & \text{otherwise}.
\end{cases}
\end{equation*}

\begin{theorem}[{\cite[Thms.\ 1 \& 3]{Leonard}}]\label{th:girsanov}
  Then, for all $\mathsf{P} \in \mathscr{P}(\Omega)$ such that $\mathcal{H}(\mathsf{P} \given \mathsf{R}) < \infty$, there exist an adapted process $\beta$, and predictable non-negative generalized process $\ell$ such that
  \begin{equation*}
    \esp_{\mathsf{P}} \int_{0}^{1} \beta_{s} \cdot C(\mathtt{d} s) \beta_{s} + \esp_{\mathsf{P}} \int_{0}^{1} \int_{\mathbb{R}^{d} \setminus \{0\}} \theta^{\star}(\abs{\ell_{s}(y)-1}) \nu(\mathtt{d}s \mathtt{d} y) < \infty.
  \end{equation*}
  Moreover, $\mathsf{P}$ is a semimartingale with characteristics $(B^{h} + \tilde{B}, C, \ell \nu)$,
  where
  \begin{equation*}
    \tilde{B}_{t} \coloneq \int_{0}^{t} C(\mathtt{d} s) \beta_{s} + \int_{0}^{t} \int_{\mathbb{R}^{d} \setminus \{0\}} h(y)(\ell_{s}(y) - 1) \nu(\mathtt{d} s \mathtt{d} y).
  \end{equation*}
\end{theorem}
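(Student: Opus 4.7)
Since $\mathcal{H}(\mathsf{P} \given \mathsf{R}) < \infty$ forces $\mathsf{P} \ll \mathsf{R}$, the density process $D_{t} \coloneq \esp_{\mathsf{R}}[\mathtt{d} \mathsf{P}/\mathtt{d} \mathsf{R} \given \mathfrak{F}_{t}]$ is a strictly positive $\mathsf{R}$-martingale. The plan is to write $D = D_{0}\, \mathcal{E}(L)$ via the stochastic logarithm $L = \int D_{-}^{-1} \mathtt{d} D$, which is an $\mathsf{R}$-local martingale, and then to read off $\beta$ and $\ell$ from the canonical decomposition of $L$. The Jacod--Shiryaev representation of local martingales with respect to a semimartingale with characteristics $(B^{h}, C, \nu)$ \cite[Thm.\ III.4.34]{JacShi} provides a predictable $\beta$ and a nonnegative generalized predictable process $\ell$ such that
\begin{equation*}
  L = \int_{0}^{\cdot} \beta_{s} \cdot \mathtt{d} X^{c}_{s} + (\ell - 1) * (\mu - \nu) + N,
\end{equation*}
with $N$ orthogonal to $X^{c}$ and to $\mu - \nu$. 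Strict positivity of $D$ forces $1 + \Delta L > 0$, hence $\ell \geq 0$; the orthogonal piece $N$ contributes nothing to the characteristics of $\mathsf{P}$ and can be set aside.

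Once $D$ has been decomposed, Girsanov's theorem \cite[Thm.\ III.3.24]{JacShi} applies directly: passing from $\mathsf{R}$ to $\mathsf{P}$ leaves $C$ unchanged, shifts the drift of $X^{c}$ by $\int_{0}^{\cdot} C(\mathtt{d} s)\beta_{s}$, and replaces the compensator $\nu$ of the canonical jumps measure by $\ell\nu$. Because $B^{h}$ is defined by compensating $h \ast \mu$ in the $\mathsf{R}$-decomposition, recompensating under $\mathsf{P}$ adds the supplementary finite-variation term $\int_{0}^{\cdot} \int h(y)(\ell_{s}(y) - 1) \nu(\mathtt{d} s \mathtt{d} y)$, and collecting contributions produces exactly the announced characteristics $(B^{h} + \tilde{B}, C, \ell\nu)$.

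For the integrability bound I would compute $\mathcal{H}(\mathsf{P} \given \mathsf{R}) = \esp_{\mathsf{P}}[\log D_{1}]$ by applying Itō's formula to $\log \mathcal{E}(L)$, which gives
\begin{equation*}
  \log D_{1} = \log D_{0} + L_{1} - \tfrac{1}{2}\int_{0}^{1} \beta_{s} \cdot C(\mathtt{d} s)\beta_{s} + \paren*{\log \ell - (\ell - 1)} * \mu_{1}.
\end{equation*}
Taking $\esp_{\mathsf{P}}$, using the $\mathsf{P}$-compensator $\ell\nu$ of $\mu$ both to rewrite the jump integral and to compute $\esp_{\mathsf{P}}[L_{1}]$, a direct algebraic simplification based on $\ell \log \ell - \ell + 1 = \theta^{\star}(\ell - 1)$ collapses everything into
\begin{equation*}
  \mathcal{H}(\mathsf{P} \given \mathsf{R}) = \esp_{\mathsf{P}}[\log D_{0}] + \tfrac{1}{2} \esp_{\mathsf{P}} \int_{0}^{1} \beta_{s} \cdot C(\mathtt{d} s)\beta_{s} + \esp_{\mathsf{P}} \int_{0}^{1} \int \theta^{\star}(\ell_{s}(y) - 1) \nu(\mathtt{d} s \mathtt{d} y).
\end{equation*}
Each summand on the right is nonnegative, so finiteness of the left-hand side yields the claimed integrability; the stronger statement with $\abs{\ell - 1}$ follows from the same argument together with elementary comparison estimates between $\theta^{\star}$ and $\theta^{\star} \circ \abs{\cdot}$ on the region $\{\ell \in [0,1]\}$.

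The hard part is the careful localisation required at two places: the stochastic logarithm is only well-defined on $\{D_{-} > 0\}$, and the change-of-compensator identity $\esp_{\mathsf{P}}[f * \mu] = \esp_{\mathsf{P}}[f \ell * \nu]$ that powers the entropy computation must be justified without a priori uniform integrability. These Fubini-type manipulations are the delicate ingredient; they are established in \cite{Leonard}, and we invoke the result there rather than redoing the details.
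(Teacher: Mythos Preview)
The paper does not prove this theorem at all: it is stated as a citation to \cite[Thms.\ 1 \& 3]{Leonard}, with only a remark that the continuous and pure-jump cases treated separately there combine to give the mixed statement. Your proposal sketches the standard stochastic-logarithm plus Girsanov route and then, at the delicate localisation and Fubini steps, also defers to \cite{Leonard}; so in substance you and the paper do the same thing, namely invoke the cited reference, and your additional heuristic outline is consistent with that source.
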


\begin{remark}
  \cite{Leonard} states two different theorems, one continuous martingales, and another for pure jump martingales.
  Decomposing the semimartingale $\mathsf{P}$ in its continuous part and pure jump part, it is clear that the results carry out to the mixed case.
\end{remark}

\subsubsection{Density under finite entropy}
Additionally to the Girsanov theorem, \cite{Leonard} obtains an expression of the density in terms of the processes $\beta$ and $\ell$.
Consider the \emph{log-density process}
\begin{equation*}
  Z_t \coloneq \log \esp_{\mathsf{R}}\bracket*{\frac{\mathtt{d}\mathsf{P}}{\mathtt{d}\mathsf{R}}  \given \mathfrak{F}_t}.
\end{equation*}

\begin{proposition}[{\cite[Thms.\ 2 \& 4]{Leonard}}]\label{th:density-girsanov}
  On the event $\set*{\frac{\mathtt{d}\mathsf{P}}{\mathtt{d}\mathsf{R}}>0 }$, we have $Z = Z^{c} + Z^{+} + Z^{-}$, where, under $\mathsf{P}$
\begin{align*}
  & Z^{c}_{t} \coloneq \int_{0}^{t} \beta_{s} \cdot (\mathtt{d} X_{s} - \mathtt{d} B^{h}_{s} - C(\mathtt{d}s) \beta_{s}) + \frac{1}{2} \int_{0}^{t} \beta_{s} \cdot C(\mathtt{d} s) \beta_{s};
\\& Z^{+}_{t} \coloneq  \int_{[0,t]\times\mathbb{R}^d_*} \mathbf{1}_{\set*{\ell\geq\frac{1}{2}}} \, \log \ell \, \mathtt{d}(\mu^X-\ell\nu) + \int_{[0,t] \times \mathbb{R}^d_*} \mathbf{1}_{\set*{\ell\geq\frac{1}{2}}} \, \theta(\ell -1) \, \mathtt{d}\nu
\\& Z^{-}_{t} =  \int_{[0,t] \times \mathbb{R}^d_*} \mathbf{1}_{\set*{0\leq\ell<\frac{1}{2}}} \, \log \ell \, \mathtt{d} \mu^X + \int_{[0,t]\times\mathbb{R}^d_*} \mathbf{1}_{\set*{0\leq\ell<\frac{1}{2}}}\, (-\ell+1)\, \mathtt{d}\nu.
\end{align*}
\end{proposition}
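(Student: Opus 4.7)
The plan is to identify the density process with a Doléans--Dade exponential and then apply Itō's formula for its logarithm. The process $D_{t} \coloneq \esp_{\mathsf{R}}\bracket{\mathtt{d}\mathsf{P}/\mathtt{d}\mathsf{R} \given \mathfrak{F}_{t}}$ is a non-negative $\mathsf{R}$-martingale, so on the event $\set{D > 0}$ there is a unique $\mathsf{R}$-local martingale $L$ with $L_{0} = 0$ such that $D = \mathcal{E}(L)$. Matching the modification of characteristics prescribed by \cref{th:girsanov} with that induced by an arbitrary Girsanov change of measure \cite{JacShi} identifies $L^{c}_{t} = \int_{0}^{t} \beta_{s} \cdot \mathtt{d} X^{c}_{s}$ and $L^{d} = (\ell - 1) * (\mu - \nu)$, where $X^{c}$ is the continuous $\mathsf{R}$-martingale part of $X$. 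The Doléans--Dade formula $\log \mathcal{E}(L) = L - \tfrac{1}{2}\langle L^{c}\rangle + \sum_{s}\paren*{\log(1 + \Delta L_{s}) - \Delta L_{s}}$ then expresses $Z$ as an $\mathsf{R}$-semimartingale, and the task reduces to rewriting each $\mathsf{R}$-martingale piece as a $\mathsf{P}$-semimartingale via \cref{th:girsanov}.

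For the continuous part, $\langle L^{c}\rangle_{t} = \int_{0}^{t} \beta_{s} \cdot C(\mathtt{d}s)\beta_{s}$, and under $\mathsf{P}$ the $\mathsf{R}$-martingale $X^{c}$ becomes the continuous part of $X - B^{h} - \int_{0}^{\cdot}C(\mathtt{d}s)\beta_{s}$, which reproduces the stated $Z^{c}$. For the jump piece, $\Delta L_{s} = \ell_{s}(\Delta X_{s}) - 1$ on $\set{\Delta X_{s} \neq 0}$, so a direct rearrangement of $L^{d} + \sum\paren*{\log(1+\Delta L) - \Delta L}$ gives the formal jump contribution $\log\ell * \mu + (1 - \ell) * \nu$.

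The heart of the matter, and the only real obstacle, is that at $\ell = 0$ neither $\log \ell * \mu$ nor $\log \ell * \nu$ is $\mathsf{P}$-integrable on its own; the splitting at $\ell = 1/2$ is the device that makes each summand well-defined. On $\set{\ell \geq 1/2}$, $\log \ell \geq -\log 2$ is bounded below, so one compensates against the $\mathsf{P}$-compensator $\ell\nu$: the local martingale part is $\mathbf{1}_{\ell \geq 1/2}\log\ell * (\mu - \ell\nu)$, and the finite-variation remainder equals $\mathbf{1}_{\ell \geq 1/2}\paren*{\ell\log\ell - \ell + 1} * \nu$, which is the $\theta^{\star}(\ell-1)$-type term of $Z^{+}$. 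On $\set{0 \leq \ell < 1/2}$, the integrability $\theta^{\star}(\abs{\ell - 1}) * \nu < \infty$ guaranteed by \cref{th:girsanov} forces $\mathbf{1}_{\ell < 1/2} * \nu < \infty$ $\mathsf{P}$-almost surely (since $\theta^{\star}(\abs{\ell-1})$ is bounded below by a positive constant on $\set{\ell < 1/2}$), so only finitely many jumps occur in that regime and $\mathbf{1}_{\ell < 1/2}\log\ell * \mu$ is a pathwise finite sum requiring no compensation; adding the leftover $\mathbf{1}_{\ell < 1/2}(1-\ell) * \nu$ yields $Z^{-}$. Everything else is bookkeeping with the characteristics; the delicate step is the choice of which representation (compensated versus purely pathwise) to use on each of the two regimes.
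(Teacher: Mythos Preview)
The paper does not supply its own proof of this proposition; it is quoted from \cite[Thms.~2 \& 4]{Leonard} and used as a black box. Your outline is the standard derivation one would expect there: write the density martingale as a Doléans--Dade exponential $\mathcal{E}(L)$, identify $L^{c}$ and $L^{d}$ by matching the Girsanov transform of characteristics from \cref{th:girsanov} with \cite[III.3.11]{JacShi}, take logarithms, and then split the jump contribution at a threshold so that on $\{\ell \geq 1/2\}$ one can compensate against the $\mathsf{P}$-compensator $\ell\nu$, while on $\{0 \leq \ell < 1/2\}$ the finiteness of $\mathbf{1}_{\ell<1/2}*\nu$ (forced by the $\theta^{\star}$-integrability in \cref{th:girsanov}) makes $\mathbf{1}_{\ell<1/2}\log\ell*\mu$ a pathwise finite sum. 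The argument is sound.

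One incidental observation: your computation correctly produces $\theta^{\star}(\ell-1) = \ell\log\ell - \ell + 1$ as the predictable remainder on $\{\ell \geq 1/2\}$. The $\theta(\ell-1)$ appearing in the displayed statement of the proposition is a typo in the paper, as confirmed by the paper's own use of $\theta^{\star}(\ell-1)$ in \cref{eq:decomposition-density} and throughout the proof of \cref{th:decomposition-girsanov}.
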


\begin{remark}
  \begin{enumerate}[(a),wide,nosep]
    \item As noticed in \cite{Leonard}, on the event $\left\{\frac{dP}{dR}>0\right\}$, $\ell>0$ and the sum in the first integral of the definition of $Z^-$ is finite.
    \item The exact value $1/2$ for the cut-off between $Z^{+}$ and $Z^{-}$ is artificial and could be chosen anywhere in $(0,1)$.
    \item This expression of $Z$ does not provide a decomposition as the sum of $P$-local martingale and absolutely continuous process.
      Indeed, the stochastic integral $(\log\ell)*(\mu^X-\ell\nu)$ is meaningless, in general.
      Additional assumptions could guarantee that it makes sense.
  \end{enumerate}
\end{remark}

\subsubsection{Disintegration with respect to the initial condition}
So far $\mathsf{P}_{0}$ could be arbitrary, we consider the regular disintegration of $\mathsf{P}$ along its marginal $\mathsf{P}_{0}$.
In this way, there exists a family $(\mathsf{P}^{x})_{x \in \mathbb{R}^{d}}$ such that $\mathsf{P}^{x}$ is supported on $\{ X_{0} = x\}$ and
\begin{equation*}
  \mathsf{P} = \int \mathsf{P}^{x} \mathsf{P}_{0}(\mathtt{d} x).
\end{equation*}
The semimartingale property, the Markov property, the special semimartingale property, and the Feller property are stable under this conditioning.
\begin{remark}
In particular, if $\mathsf{R}$ is a Markov measure, then all the $\mathsf{R}^{x}$ are also Markov.
The data $(\Omega, \mathfrak{F}, (X_{t})_{t \in [0,1]}, (\mathsf{R}^{x})_{x \in \mathbb{R}^{d}})$ is sometimes what is called a Markov process.
\end{remark}

\subsubsection{Chain rule for the entropy}
By the chain rule for the entropy, \cite[Thm.\ D.13]{DemboZeitouni}, it turns out that
\begin{equation}\label{eq:chain-rule-entropy}
  \mathcal{H}(\mathsf{P} \given \mathsf{R}) = \mathcal{H}(\mathsf{P}_{0} \given \mathsf{R}_{0}) + \int \mathcal{H}(\mathsf{P}^{x} \given \mathsf{R}^{x}) \mathsf{P}_{0}(\mathtt{d} x).
\end{equation}
In particular, if $\mathcal{H}(\mathsf{P} \given \mathsf{R}) < \infty$, and that $\mathsf{R}$ is a semimartingale, then all the $\mathsf{P}^{x}$ ($x \in \mathbb{R}^{d}$) are also semimartingales.

\section{The Brenier--Schrödinger problem with respect to a Feller semimartingale}

\subsection{Formulation of the problem}
\subsubsection{Setting}
In the rest of the paper, we fix $\mathsf{R} \in \mathscr{P}(\Omega)$ a Feller semimartingale with characteristics as in \cref{eq:characteristics-feller}.
Let $\pi\in\mathscr{P}(\mathbb{R}^n\times \mathbb{R}^n)$, that we interpret as a coupling between the initial and the final position.
Let $(\mu_t)_{t \in [0,1]} \in \mathscr{P}(\mathbb{R}^n)^{[0,1]}$, that we interpret as the incompressibility condition.
We study the Brenier--Schrödinger minimisation problem, with respect to the measure $R$  
\begin{equation}\label{eq:BS}\tag{LBS}
  \Inf*{ \mathcal{H}(\mathsf{P} \given \mathsf{R}) : \mathsf{P} \in \mathscr{P}(\Omega),\ \forall t \in [0,1],\, P_t = \mu_t,\ P_{01}=\pi }
\end{equation} 
\subsubsection{General shape of the solutions}
By \cite[Thm.\ 4.7]{ACLZ}, every minimizer $\mathsf{P}$ in \cref{eq:BS} is a reciprocal measure of the form
\begin{equation*}
  \mathsf{P} = \Exp*{ \eta(X_0,X_1) + A_{1} } \mathsf{R},
\end{equation*}
for some Borel function $\eta \colon \mathbb{R}^{d} \times \mathbb{R}^{d} \to \mathbb{R}$ and an additive functional $A$.
Conditioning at $\{X_{0} = x\}$, we find that
\begin{equation*}
  \mathsf{P}^{x} = \Exp*{ \eta(x,X_{1}) + A_{1} } \mathsf{R}^{x}.
\end{equation*}
Using the additive property of $A$, we find that
\begin{equation*}
  A_{1} = A_{[0,1]} = A_{t^{-}} + A_{[t,1]}.
\end{equation*}
It follows that the log-density process looks like
\begin{equation*}
  Z^{x}_{t} \coloneq \log \esp_{\mathsf{R}^{x}} \bracket*{ \frac{\mathtt{d}\mathsf{P}^{x}}{\mathtt{d}\mathsf{R}^{x}} \given \mathfrak{F}_{t} } = A_{t^{-}} + \log \esp_{\mathsf{R}^{x}} \bracket*{ \exp(x, X_{1}) + A_{[t,1]} \given X_{t} }.
\end{equation*}
In view of this formula, let us define
\begin{equation}\label{eq:potential}
  \psi^{x}(t,z) \coloneq \log \esp_{\mathsf{R}^x}\bracket*{ \Exp*{ \eta(x,X_1) + A_{[t,1]} } \given X_t = z}, \qquad t \in [0,1], z \in \mathbb{R}^{d}.
\end{equation}
With this definition, the above expression for $Z^{x}$ reads
\begin{equation*}
  Z_{t}^{x} = A_{t^{-}} + \psi^{x}(t, X_{t}).
\end{equation*}

\subsection{Non-local Hamilton--Jacobi--Bellman equation}

Let us reformulate the main result of the paper with our introduced terminology and give the prove.
\begin{theorem}\label{th:hjb}
  Let $\mathsf{P} \in \mathscr{P}(\Omega)$ be a regular solution to \cref{eq:BS}.
  Then, there exists a function $p \colon [0,1] \times \mathbb{R}^{d}$, such that the potential $\psi^{x}$ defined in \cref{eq:potential}, is a strong solution to
  \begin{equation*}
    \partial_{t} \psi^{x}_{t}(z)+ \mathrm{e}^{-\psi^{x}_{t}(z)} (\mathbf{A} \mathrm{e}^{\psi^{x}_{t}})(z) + \frac{1}{2} \nabla \psi_{s}^{x}(z) \cdot c(z) \nabla \psi_{s}^{x}(z) + p_{t}(z) = 0.
  \end{equation*}
\end{theorem}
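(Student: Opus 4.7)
The plan is to compute the canonical decomposition of the special semimartingale $Z^{x}_{t} = A_{t^{-}} + \psi^{x}(t, X_{t})$ under $\mathsf{P}^{x}$ in two different ways, and then use uniqueness of such a decomposition (\cite[I.4.23--24]{JacShi}) to identify the predictable finite variation parts.

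The first decomposition comes from the Itō--Lévy formula. Because $\psi^{x}$ is $\mathscr{C}^{1,2}$ and Girsanov's theorem \cref{th:girsanov} describes $X$ as a $\mathsf{P}^{x}$-semimartingale with characteristics $(B^{h} + \tilde{B}, C, \ell \nu)$, one can rewrite $\psi^{x}(t, X_{t})$ as a $\mathsf{P}^{x}$-local martingale---built from the continuous stochastic integral against $X^{c}$ and from the compensated jump integral against $\mu - \ell \nu$---plus a predictable process, absolutely continuous in time, whose drift involves $\partial_{t} \psi^{x}$, $\mathbf{A} \psi^{x}$, and Girsanov correction terms depending on $\beta$ and $\ell$. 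Adding the additive functional $t \mapsto A_{t^{-}}$, which is predictable of finite variation, produces a canonical decomposition of $Z^{x}$.

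The second decomposition is the content of \cref{th:density-girsanov}, which expresses $Z^{x}$ under $\mathsf{P}^{x}$ directly in terms of the Girsanov coefficients $\beta$ and $\ell$. After reorganizing the stochastic integrals into a genuine $\mathsf{P}^{x}$-local martingale part and a predictable finite variation part, one obtains a second canonical decomposition. By uniqueness, the two predictable parts and the two local martingale parts must coincide. Matching the continuous and jump martingale parts separately identifies the Girsanov coefficients in terms of $\psi^{x}$: one expects $\beta_{s} = \nabla \psi^{x}(s, X_{s^{-}})$ and $\ell_{s}(y) = \exp\bigl( \psi^{x}(s, X_{s^{-}} + y) - \psi^{x}(s, X_{s^{-}}) \bigr)$. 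Substituting these expressions into the equality of the drifts, and collapsing the Girsanov corrections using the explicit form of the Feller generator \cref{eq:generator-feller}, reduces the drift equality to
\begin{equation*}
    \mathtt{d} A_{s} = -\Bigl( \partial_{s} \psi^{x} + \mathrm{e}^{-\psi^{x}} \mathbf{A}\, \mathrm{e}^{\psi^{x}} \Bigr)(s, X_{s})\, \mathtt{d} s.
\end{equation*}
The right-hand side is absolutely continuous in time, whence so is $A$; defining $p$ through this equality yields \cref{eq:existence-pressure}, and rearranging gives the announced Hamilton--Jacobi--Bellman equation. An a posteriori consistency check is that the candidate $p$ must be independent of the starting point $x$, reflecting the additive nature of $A$.

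The main obstacle will be in the second step: the expression of $Z^{x}$ in \cref{th:density-girsanov} is not manifestly a canonical decomposition, since the stochastic integral $(\log \ell) * (\mu - \ell \nu)$ is not, in general, a $\mathsf{P}^{x}$-local martingale. Putting it into canonical form requires the regularity hypothesis that $\psi^{x}(X)$ be special, which through \cite[I.4.23--24]{JacShi} ensures enough control on the size of the jumps of $Z^{x}$ to justify the truncation-and-compensation of the jump part and to produce a bona fide martingale.
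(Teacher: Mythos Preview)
Your proposal is correct and follows essentially the same approach as the paper: two canonical decompositions of $Z^{x}$ (one via Itō's formula under the $\mathsf{P}^{x}$-characteristics, one via the Girsanov density representation), identification of the martingale parts to read off $\beta$ and $\ell$ in terms of $\psi^{x}$, and identification of the predictable parts to extract the absolutely continuous structure of $A$ and the HJB equation. You also correctly pinpoint the technical crux---that $(\log \ell)\ast(\mu-\ell\nu)$ is not a priori a local martingale and that the specialness of $\psi^{x}(X)$ is what makes the compensation legitimate---which is exactly the content of the paper's \cref{th:decomposition-girsanov}.
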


The proof follows from two different representations of $Z^{x}$ as a special martingale.

\subsubsection{Decomposition using Itō's formula}

For short, write
\begin{equation*}
  J_{s}^{x}(y) \coloneq \psi_{s}^{x}(X_{s^{-}} + y) - \psi_{s}^{x}(X_{s^{-}}).
\end{equation*}
\begin{lemma}\label{th:decomposition-ito}
  Under $\mathsf{P}^{x}$, the process $\psi^{x}_{\cdot}(X)$ is a special martingale.
  Moreover, we have
  \begin{equation*}
    \begin{split}
      \psi_{t}^{x}(X_{t}) &= \psi_{t}^{x}(x) + \int_{0}^{t} \nabla \psi_{s}^{x}(X_{s^{-}}) \cdot \mathtt{d} X^{c}_{s} + J * (\mu - \ell^{x} \nu) + \int_{0}^{t} \partial_{s} \psi_{s}^{x}(X_{s^{-}}) \mathtt{d} s 
                        \\&+ \int \nabla \psi_{s}^{x}(X_{s^{-}}) \cdot \mathtt{d} \bar{B}_{s} + \frac{1}{2} \int_{0}^{t} \nabla^{2} \psi_{s}^{x}(X^{s^{-}}) \cdot \mathtt{d} C_{s} + \int_{0}^{t} [J_{s}(y) - \nabla \psi_{s}^{x}(X_{s^{-}}) \cdot h(y)] \nu(\mathtt{d}s \mathtt{d}y).
    \end{split} 
    \end{equation*}
  \end{lemma}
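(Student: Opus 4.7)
The plan is to apply Itô's formula with jumps to the $\mathscr{C}^{1,2}$ function $\psi^{x}$ evaluated along the canonical semimartingale $X$. The regularity hypothesis provides exactly the smoothness needed for Itô, while \cref{th:girsanov} gives the canonical decomposition of $X$ under $\mathsf{P}^{x}$: the drift becomes $B^{h} + \tilde B$, the quadratic variation of the continuous part remains $C$, and the compensator of the jump measure $\mu$ becomes $\ell^{x} \nu$. Combined with the Jacod--Shiryaev representation \cite[Thm.\ II.2.34]{JacShi}, this yields, under $\mathsf{P}^{x}$,
\[
X_{t} = X_{0} + X^{c}_{t} + h * (\mu - \ell^{x} \nu)_{t} + (y - h) * \mu_{t} + B^{h}_{t} + \tilde B_{t},
\]
where $X^{c}$ denotes the continuous local martingale part.

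Applying Itô's formula produces four contributions: the time derivative $\int_{0}^{t} \partial_{s} \psi^{x}_{s}(X_{s^{-}}) \mathtt{d} s$, the stochastic integral $\int_{0}^{t} \nabla \psi^{x}_{s}(X_{s^{-}}) \cdot \mathtt{d} X_{s}$, the quadratic variation term $\tfrac{1}{2} \int_{0}^{t} \nabla^{2} \psi^{x}_{s}(X_{s^{-}}) \cdot \mathtt{d} C_{s}$, and the jump correction $\sum_{s \leq t}[\psi^{x}_{s}(X_{s}) - \psi^{x}_{s}(X_{s^{-}}) - \nabla \psi^{x}_{s}(X_{s^{-}}) \cdot \Delta X_{s}]$, which I rewrite as $\int [J_{s}(y) - \nabla \psi^{x}_{s}(X_{s^{-}}) \cdot y] \, \mu(\mathtt{d} s \mathtt{d} y)$. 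I would then substitute the semimartingale decomposition of $\mathtt{d} X_{s}$ into the stochastic integral and expand $y = h(y) + (y-h(y))$ in the jump correction. The integrals against $(y-h)\mu$ from the two sources cancel; the $h$-against-$\mu$ piece of the jump correction combines with the small-jump martingale $h*(\mu - \ell^{x}\nu)$ to produce, after adding and subtracting the compensator, the compensated-jump martingale $J*(\mu - \ell^{x}\nu)$ together with a predictable drift of the form $[J - \nabla \psi^{x} \cdot h]$ integrated against the compensator. Absorbing the continuous part $\int C(\mathtt{d} s)\beta_{s}$ of $\tilde B$ into the combined predictable drift $\bar B$ then delivers the displayed decomposition. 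That $\psi^{x}_{\cdot}(X)$ is a special semimartingale is part of the regularity hypothesis; uniqueness of the canonical decomposition of a special semimartingale identifies $\int \nabla \psi^{x} \cdot \mathtt{d} X^{c} + J * (\mu - \ell^{x}\nu)$ as the local martingale part, the rest being predictable with integrable variation.

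The main obstacle is the careful bookkeeping of the jump integrals: under $\mathsf{P}^{x}$ the correct compensator is $\ell^{x}\nu$ rather than $\nu$, and the terms coming from the Girsanov drift $\tilde B = \int C(\mathtt{d} s)\beta_{s} + h(\ell^{x}-1) * \nu$ must be reorganised precisely to leave exactly one compensated martingale against $\mu - \ell^{x}\nu$ together with the predictable drift displayed in the statement. A secondary technical point is to justify that $J*(\mu - \ell^{x}\nu)$ is a well-defined $\mathsf{P}^{x}$-local martingale; this reduces to an integrability bound on $J$ against $\ell^{x}\nu$, which follows from the $\mathscr{C}^{2}$ regularity of $\psi^{x}$ combined with the $\theta^{\star}$-integrability provided by \cref{th:girsanov}. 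The specialness assumption then guarantees that the predictable finite variation terms have $\mathsf{P}^{x}$-integrable total variation on $[0,1]$.
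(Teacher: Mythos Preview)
Your proposal is essentially the paper's own argument: identify the characteristics of $X$ under $\mathsf{P}^{x}$ via \cref{th:girsanov}, apply Itô's formula with jumps, and regroup the jump integrals so that the compensated term $J*(\mu-\ell^{x}\nu)$ appears alongside the predictable drift. The only notable difference is that the paper shortcuts your explicit bookkeeping by invoking \cite[Thm.\ II.2.42]{JacShi} directly (Itô's formula in special semimartingale form), which immediately yields that $W*(\mu-\ell^{x}\nu)$ is a local martingale from the specialness hypothesis alone---so your secondary justification via $\theta^{\star}$-integrability of $\ell^{x}$ is unnecessary and not quite the right mechanism, since that bound controls $\ell^{x}-1$ rather than $J$.
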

\begin{proof}
  Since by assumption, $\mathsf{R}^{x}$ is a Feller semimartingale with characteristics $(B^{h}, C, \nu)$, by \cref{eq:chain-rule-entropy,th:girsanov}, we find that $\mathsf{P}^{x}$ is a semimartingale.
  Moreover, there exist an adapted process $\beta^{x}$ and a predictable non-negative process $\ell^{x}$ with characteristics $(B^{h} + \tilde{B}^{x}, C, \ell^{x} \nu)$.
  For short, let us consider the local martingale $M$, the drift $\bar{B}$, and the generalized processes $J$ and $W$ defined by
  \begin{align*}
    & M \coloneq X^{c} + h \ast (\mu - \ell^{x} \nu);
  \\& \bar{B} \coloneq B^{h} + B^{x};
  \\& J_{s}(y) \coloneq \psi_{s}^{x}(X_{s^{-}} + y) - \psi_{s}^{x}(X_{s^{-}});
  \\& W_{s}(y) \coloneq J_{s}(y) - \nabla \psi_{s}^{x}(X_{s^{-}}) \cdot h(y).
  \end{align*}
  Since, by assumption, $\psi^{x}(X)$ is a special semi-martingale, the same argument as in \cite[Thm.\ II.2.42]{JacShi} yields that $W * \mu - W * (\ell^{x} \nu) = W * (\mu - \ell^{x} \nu)$ is a local martingale, and
  \begin{equation*}
    \begin{split}
      \psi_{t}^{x}(X_{t}) &= \psi_{t}^{x}(x) + \int_{0}^{t} \partial_{s} \psi_{s}^{x}(X_{s^{-}}) \mathtt{d} s + \int_{0}^{t} \nabla \psi_{s}^{x}(X_{s^{-}}) \cdot \mathtt{d} M_{s} + \int \nabla \psi_{s}^{x}(X_{s^{-}}) \cdot \mathtt{d} \bar{B}_{s} 
                        \\&+ \frac{1}{2} \int_{0}^{t} \nabla^{2} \psi_{s}^{x}(X_{s^{-}}) \cdot \mathtt{d} C_{s} +  W \ast (\mu - \ell^{x}\nu) + W * (\ell^{x} \nu).
    \end{split}
  \end{equation*}
  We conclude by observing that
  \begin{equation*}
    \int_{0}^{t} \nabla \psi_{s}^{x}(X_{s^{-}}) \cdot \mathtt{d} (h * (\mu - \ell^{x} \nu)) + W * (\mu - \ell^{x} \nu) = J * (\mu - \ell^{x} \nu).
  \end{equation*}
\end{proof}

\subsubsection{Decomposition using Girsanov's theorem}

\begin{lemma}\label{th:decomposition-girsanov}
  The process $(\log \ell^{x}) * (\mu - \ell^{x} \nu)$ is a well-defined local martingale.
  Moreover, under $\mathsf{P}^{x}$,
\begin{equation*}
  \begin{split}
    Z^{x}_t &= (\log \ell^{x}) * (\mu - \ell^{x} \nu) + \int_{0}^{t} \beta^{x}_{s} \cdot \mathtt{d} X^{c}_{s} 
          \\&+ \int_{0}^{t} \int \theta^{\star}(\ell^{x}_{s}(y) - 1) N(X_{s}, \mathtt{d} y) \mathtt{d} s + \frac{1}{2} \int_{0}^{t} \beta^{x}_{s} \cdot c(X_{s}) \beta^{x}_{s} \mathtt{d} s,
  \end{split}
\end{equation*}
where the first line on the right-hand side is a local martingale, and the second line if a predictable process.
\end{lemma}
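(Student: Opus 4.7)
The plan is to apply \Cref{th:density-girsanov} to the pair $(\mathsf{P}^{x},\mathsf{R}^{x})$ and then repackage the three terms $Z^{x,c}+Z^{x,+}+Z^{x,-}$ into the claimed form, using the Feller structure of $\mathsf{R}$ to simplify the compensators and the regularity hypothesis to justify that $(\log \ell^{x})\ast(\mu-\ell^{x}\nu)$ makes sense as a local martingale.

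First, \Cref{th:girsanov,th:density-girsanov} produce adapted $\beta^{x}$ and non-negative predictable $\ell^{x}$ such that, under $\mathsf{P}^{x}$, the characteristics of $X$ are $(B^{h}+\tilde{B}^{x}, C, \ell^{x}\nu)$ and $Z^{x}=Z^{x,c}+Z^{x,+}+Z^{x,-}$. Using the Feller identities $C(\mathtt{d} s)=c(X_{s})\mathtt{d} s$ and $\nu(\mathtt{d} s\mathtt{d} y)=N(X_{s},\mathtt{d} y)\mathtt{d} s$ from \cref{eq:characteristics-feller}, a direct rewriting of $\mathtt{d} X_{s}-\mathtt{d} B^{h}_{s}-C(\mathtt{d} s)\beta^{x}_{s}$ in terms of the $\mathsf{P}^{x}$-continuous local martingale part $X^{c}$ gives
\begin{equation*}
  Z^{x,c}_{t}=\int_{0}^{t}\beta^{x}_{s}\cdot \mathtt{d} X^{c}_{s}+\frac{1}{2}\int_{0}^{t}\beta^{x}_{s}\cdot c(X_{s})\beta^{x}_{s}\,\mathtt{d} s,
\end{equation*}
which already yields the continuous contribution to the claimed decomposition.

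Next, turn to the jump terms. Exploiting $\theta^{\star}(v)=(v+1)\log(v+1)-v$ and the identity $\theta^{\star}(\ell-1)=\ell\log\ell-\ell+1$, an algebraic manipulation of the indicators $\mathbf{1}_{\ell\geq 1/2}$ and $\mathbf{1}_{\ell<1/2}$ appearing in the formulas for $Z^{x,\pm}$ allows one to rewrite, at least formally,
\begin{equation*}
  Z^{x,+}_{t}+Z^{x,-}_{t}=(\log\ell^{x})\ast(\mu-\ell^{x}\nu)_{t}+\int_{0}^{t}\!\!\int \theta^{\star}(\ell^{x}_{s}(y)-1)\,N(X_{s},\mathtt{d} y)\,\mathtt{d} s.
\end{equation*}

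The main obstacle is that, in full generality, $(\log\ell^{x})\ast(\mu-\ell^{x}\nu)$ need not be well-defined as a local martingale (as emphasised after \Cref{th:density-girsanov}). To handle this, use the regularity hypothesis: by assumption $\psi^{x}_{\cdot}(X)$ is special, and since $Z^{x}_{t}=A_{t^{-}}+\psi^{x}_{t}(X_{t})$ with $A_{t^{-}}$ predictable and of finite variation, $Z^{x}$ is itself special. Hence it admits a \emph{unique} canonical decomposition into a local martingale plus a predictable finite-variation process. Matching this unique decomposition against the Itō expansion of \Cref{th:decomposition-ito} identifies the local martingale part of $\psi^{x}_{\cdot}(X)$, and therefore of $Z^{x}$, as $\int \beta^{x}\cdot \mathtt{d} X^{c}+J\ast(\mu-\ell^{x}\nu)$, and forces the analogous stochastic integral $(\log\ell^{x})\ast(\mu-\ell^{x}\nu)$ in the Léonard expression to converge to a local martingale as well. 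Subtracting this martingale from the full expression for $Z^{x}$ leaves precisely the predictable drift $\int \theta^{\star}(\ell^{x}-1)\,\mathtt{d}\nu+\tfrac{1}{2}\int\beta^{x}\cdot c(X)\beta^{x}\,\mathtt{d} s$, completing the proof.
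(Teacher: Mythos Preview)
Your handling of the continuous piece $Z^{x,c}$ matches the paper and is fine. The gap is in the jump part, precisely at the step you flag as ``the main obstacle''.

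You argue that speciality of $Z^{x}$, together with the It\=o decomposition of \Cref{th:decomposition-ito}, ``forces'' $(\log\ell^{x})\ast(\mu-\ell^{x}\nu)$ to converge. This does not work. \Cref{th:decomposition-ito} only tells you that the local-martingale part of $Z^{x}$ equals $\int\nabla\psi^{x}\cdot\mathtt{d}X^{c}+J\ast(\mu-\ell^{x}\nu)$ (note: $\nabla\psi^{x}$, not $\beta^{x}$, at this stage). Uniqueness of the canonical decomposition lets you identify two \emph{convergent} expressions for the same local martingale; it does not let you conclude that some \emph{other} formal expression converges. You have not established $\log\ell^{x}=J$ yet---that identification is exactly what the subsequent comparison of \Cref{th:decomposition-ito,th:decomposition-girsanov} is designed to produce in the proof of \Cref{th:hjb}---so invoking it here is circular. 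Moreover, even granting well-definedness of $(\log\ell^{x})\ast(\mu-\ell^{x}\nu)$, you would still need to justify the split $\mathbf{1}_{\ell<1/2}\log\ell\ast\mu=\mathbf{1}_{\ell<1/2}\log\ell\ast(\mu-\ell^{x}\nu)+\mathbf{1}_{\ell<1/2}\log\ell\ast(\ell^{x}\nu)$, which requires an integrability estimate you have not supplied.

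The paper avoids this circularity by arguing directly from the finite-entropy bound in \Cref{th:girsanov}. Since $\ell\,\mathbf{1}_{\ell\geq 2}\log\ell$ and $\ell\,(\mathbf{1}_{1/2\leq\ell<2}\log\ell)^{2}$ are dominated by $\theta^{\star}(|\ell-1|)$, \cite[II.1.27]{JacShi} shows that the first integral in $Z^{x,+}$ is already a local martingale; the second and fourth integrals in \cref{eq:decomposition-density} are likewise dominated by $\theta^{\star}(|\ell-1|)\ast\nu$ and hence have integrable variation. Speciality of $Z^{x}$ (this is where regularity is used) then forces the remaining third term $\mathbf{1}_{\ell<1/2}\log\ell\ast\mu$ to have $\mathsf{P}^{x}$-integrable variation as well, so its compensated version is a well-defined local martingale by \cite[II.1.28]{JacShi}. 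Only after these estimates can one add the two indicator pieces by linearity and obtain the claimed decomposition.
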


\begin{proof}
  The regular solution assumption ensures that the measure $\mathsf{P}^x$ and $\mathsf{R}^x$ are equivalent. 
  The part involving $\beta^{x}$ follows directly from \cref{th:density-girsanov}. 
  In view of \cref{th:density-girsanov}, it is sufficient to show that
  \begin{equation*}
    Z_{t}^{+} + Z_{t}^{-} = (\log \ell^{x}) * (\mu - \ell^{x}\nu) + \theta^{\star}(\ell^{x} - 1) * \nu.
  \end{equation*}
  Recall that
  \begin{equation}\label{eq:decomposition-density}
  \begin{split}
    Z_t^{+} + Z_{t}^{-} = &\int_{[0,t]\times\mathbb{R}^d_*}\mathbf{1}_{\ell^{x} \geq\frac{1}{2}}\log(\ell^{x})\, \mathtt{d}(\mu-\ell^{x}\nu) + \int_{[0,t]\times\mathbb{R}^d_*}\mathbf{1}_{\ell^{x} \geq \frac{1}{2}} \theta^{\star}(\ell^{x} - 1) \, \mathtt{d}\nu
                        \\& + \int_{[0,t]\times\mathbb{R}^d_*}\mathbf{1}_{0\leq \ell < \frac{1}{2}} \log(\ell^{x})\, \mathtt{d} \mu + \int_{[0,t]\times\mathbb{R}^d_*}\mathbf{1}_{0 \leq \ell^{x} < \frac{1}{2}}(-\ell^{x} + 1)\, \mathtt{d}\nu.
  \end{split}
\end{equation}
Since $\ell^{x} \mathbf{1}_{\ell^{x} \geq2} \log(\ell^{x})$ and $\ell^{x} \, (\mathbf{1}_{1/2\leq \ell<2}\log(\ell^{x}))^{2}$ are dominated by $\theta^*(|\ell-1|)$, which is $\mathsf{P}{x}$-integrable by \cref{th:girsanov}, thus, by \cite[II.1.27, p.\ 72]{JacShi}, $\mathbf{1}_{\ell^{x} \geq 1/2} (\log \ell^{x})$ is a local martingale.
The second and the fourth terms have $\mathsf{P}^{x}$-integrable variations because they are also dominated by $\theta^*(|\ell-1|)*\nu$. 
Since $Z^{x}$ is a special semimartingale, by \cite[I.2.24]{JacShi}, the third term also have $\mathsf{P}^{x}$-integrable variations.
This implies that the stochastic integral $\mathbf{1}_{0\leq \ell <1/2}\log(\ell)*(\mu^X-\ell\nu)$ is well-defined and a local martingale.
Moreover by \cite[II.1.28]{JacShi}, we have
\begin{equation*}
  \mathbf{1}_{0 \leq \ell^{x} <1/2}\log(\ell^{x}) * (\mu - \ell^{x} \nu) = \mathbf{1}_{0\leq \ell^{x} < 1/2}\log(\ell^{x}) * \mu - \mathbf{1}_{0\leq \ell^{x} < 1/2}\log(\ell^{x}) * (\ell^{x} \nu).
\end{equation*}
The proof is thus complete by compensating the third term in \cref{eq:decomposition-density}, and by using that the map $a \mapsto a * (\mu - \ell^{x}\nu)$ is linear.
\end{proof}

\subsubsection{Conclusion}

\begin{proof}[Proof of {\cref{th:hjb}}]
  By the uniqueness of the decomposition of a special semimartingale, comparing the expression for $Z^{x}$ in \cref{th:decomposition-ito,th:decomposition-girsanov}, we obtain by identification of the local martingale parts
  \begin{equation}\label{eq:indentification-martingale}
   \log \ell^{x}_{s}(y) = \psi^{x}_{s}(X_{s^{-}} + y) - \psi^{x}_{s}(X_{s^{-}}), \qquad \text{and} \qquad
  \beta_{s}^{x} = \nabla \psi^{x}_{\cdot}(X_{s^{-}}).
  \end{equation}
By identification of the predictable part, we obtain
\begin{equation*} 
  \begin{split}
    \theta^{\star}(\ell^{x} - 1) * \nu_{s} &+ \frac{1}{2} \int_{0}^{t} \beta^{x}_{s} \cdot C(\mathtt{d} s) \beta^{x}_{s} = A_{s^{-}} + \psi^{x}_{0}(x) + \int_{0}^{t} \partial_{s} \psi^{x}_{s}(X_{s^{-}})\, \mathtt{d} s 
                                                                                                                      \\&  + \frac{1}{2} \int_{0}^{t} \nabla^{2} \psi_{s}^{x}(X^{s^{-}}) \cdot \mathtt{d} C_{s} + \int_{0}^{t} \nabla \psi^{x}_{s}(X_{s^{-}}) \cdot \mathtt{d} B^{h}_{s} + \int_{0}^{t} \nabla \psi_{s}^{x}(X_{s^{-}}) C(\mathtt{d} s) \beta^{x}_{s} 
                                                                                                                      \\&+ \int _{[0,t] '*\mathbb{R}^{d} \setminus \{0\}} \nabla \psi_{s}^{x}(X_{s^{-}}) \cdot h  \mathtt{d} (\ell^{x}-1)\nu + \int_{[0,t]\times\mathbb{R}^d_*} [J_s -\nabla \psi^{x}_{s}(X_{s^{-}})\cdot h]\, \mathtt{d}(\ell^{x}\nu).
  \end{split}
\end{equation*}
  Reporting \cref{eq:indentification-martingale} in the above, terms simplify and we arrive at
  \begin{equation*}
    \begin{split}
    &-\int_{[0,t]\times\mathbb{R}^d_*} \left[\mathrm{e}^{J_s}-1-\nabla \psi^x_s(X_s) \cdot h(y)\right]\, \mathtt{d} \nu - \frac{1}{2} \int_{0}^{t} \nabla^{2} \psi_{s}^{x}(X_{s^{-}}) \cdot \mathtt{d} C_{s} - \frac{1}{2} \int_{0}^{t} \beta_{s}^{x} \cdot C(\mathtt{d} s) \beta_{s}^{x} 
    \\& = A_{t^{-}} + \psi^{x}_{0}(x) + \int_{0}^{t} \partial_{s} \psi^{x}_{s}(X_{s^{-}})\, \mathtt{d}s
    + \int_{0}^{t} \nabla \psi^{x}_{s}(X_{s^{-}}) \cdot \mathtt{d} B^{h}_s\\
    \end{split}
  \end{equation*} 
  Recalling that the characteristics have the particular given in \cref{eq:characteristics-feller}, we see, on the one hand, that $A$ is actually absolutely continuous.
  Since $A$ is an absolutely continuous additive functional there exist, there exists a pressure $p \coloneq [0,1]\times\mathbb{R}^d$ such that 
  \begin{equation*}
  A_{t} = \int_{0}^{t} p_{s}(X_{s}) \, \mathtt{d}s.
\end{equation*}

  On the other hand, by differentiating with respect to $t$, we obtain
  \begin{equation*}
    \begin{split}
    &  \partial_{s} \psi^{x}_{s}(X_{s^{-}}) + p_{s}(X_{s}) = - \frac{1}{2} \nabla^{2} \psi_{s}^{x}(X_{s^{-}}) \cdot c(X_{s}) - \frac{1}{2} \nabla \psi_{s}^{x}(X_{s^{-}}) \cdot c(X_{s}) \nabla \psi_{s}^{x}(X_{s^{-}})
  \\& -\mathrm{e}^{-\psi^x_t(X_t)}\int_{\mathbb{R}^d_*}\left[\mathrm{e}^{\psi^x_t(X_t+y)} - \mathrm{e}^{\psi^x_t(X_t)} - \nabla \mathrm{e}^{\psi^x_t(X_t)} \cdot h(y)\right]\, N(X_{s}, \mathtt{d}y) + \nabla \psi^{x}_{s}(X_{s^{-}}) \cdot  b^{h}(X_{s^{-}}). 
    \end{split} 
\end{equation*}
This last equation is true $\mathsf{P}^x$ almost surely.
To conclude from there, we use a continuity argument and the equivalence of $\mathsf{P}^x$ and $\mathsf{R}^x$.
\end{proof}

\section{Existence of solutions for the Ornstein-Uhlenbeck problem}

Since the functional $\mathcal{H}(\cdot \given \mathsf{R})$ is convex and lower semi-continuous on $\mathscr{P}(\Omega)$, by the direct method of the calculus of variations, we obtain immediately the following result.
\begin{lemma}
  Assume that there exist $\mathsf{P} \in \mathscr{P}(\Omega)$, such that $\mathsf{P}_{01} = \pi$, $\mathsf{P}_{t} = \mu_{t}$ for all $t \in [0,1]$, and $\mathcal{H}(\mathsf{P} \given \mathsf{R}) < \infty$.
  Then, there exists a unique minimizer to \cref{eq:BS}.
\end{lemma}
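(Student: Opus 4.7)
The plan is to apply the direct method of the calculus of variations: existence follows from the compactness of sublevel sets of $\mathcal{H}(\cdot \given \mathsf{R})$ together with its lower semi-continuity, and uniqueness from strict convexity.

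First, I would pick a minimizing sequence $(\mathsf{P}^n)_n$ of admissible measures whose entropies converge to the infimum in \cref{eq:BS}. The admissibility hypothesis ensures this infimum is finite, so $\sup_n \mathcal{H}(\mathsf{P}^n \given \mathsf{R}) < \infty$. Since $\Omega$ is Polish and $\mathsf{R}$ is a fixed reference probability, the sublevel sets $\{\mathsf{Q} : \mathcal{H}(\mathsf{Q} \given \mathsf{R}) \leq C\}$ are weakly relatively compact in $\mathscr{P}(\Omega)$, via the Dunford--Pettis criterion combined with the tightness of $\mathsf{R}$. I would then extract a weakly convergent subsequence with limit $\mathsf{P}^{\star}$.

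Second, I would verify that the constraints pass to the limit. The conditions $\mathsf{P}_t = \mu_t$ for every $t \in [0,1]$ and $\mathsf{P}_{01} = \pi$ cut out an affine subspace of $\mathscr{P}(\Omega)$ that is closed under weak convergence: along the minimizing sequence each prescribed marginal is fixed, so testing against bounded continuous functions passes immediately to the limit. Lower semi-continuity of $\mathcal{H}(\cdot \given \mathsf{R})$ then gives $\mathcal{H}(\mathsf{P}^{\star} \given \mathsf{R}) \leq \liminf_n \mathcal{H}(\mathsf{P}^n \given \mathsf{R})$, so $\mathsf{P}^{\star}$ attains the infimum.

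For uniqueness, I would invoke strict convexity of $\mathcal{H}(\cdot \given \mathsf{R})$ on $\{\mathsf{Q} \ll \mathsf{R}\}$, which contains every minimizer by finiteness of the entropy. Two distinct minimizers $\mathsf{P}^{(1)}, \mathsf{P}^{(2)}$ would make $\tfrac{1}{2}(\mathsf{P}^{(1)} + \mathsf{P}^{(2)})$ admissible by linearity of the constraints, yet with strictly smaller entropy, a contradiction. The only subtlety worth flagging is that evaluation at a fixed time is not continuous on the Skorokhod space in full generality; this is painless here precisely because the constraints prescribe deterministic marginals that do not vary along the sequence, so the passage to the limit reduces to integrating bounded continuous test functions against the fixed measures $\mu_t$ and $\pi$. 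This is presumably why the authors dispatch the statement in a single line.
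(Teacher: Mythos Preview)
Your approach is exactly the paper's: the authors simply invoke the direct method of the calculus of variations together with convexity and lower semi-continuity of $\mathcal{H}(\cdot\given\mathsf{R})$, and you have spelled out the standard details (compactness of sublevel sets, extraction, strict convexity for uniqueness).

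One small point: you correctly flag the Skorokhod-space subtlety about continuity of the evaluation map $\omega\mapsto\omega_t$, but your resolution is not quite right---the fact that the prescribed marginals are constant along the sequence does not by itself let you pass the constraint to the weak limit $\mathsf{P}^\star$, since what is needed is $\int f(X_t)\,\mathtt{d}\mathsf{P}^n\to\int f(X_t)\,\mathtt{d}\mathsf{P}^\star$, and $f\circ X_t$ need not be continuous on $\Omega$. A clean fix is to note that the entropy bound makes the densities $\mathtt{d}\mathsf{P}^n/\mathtt{d}\mathsf{R}$ uniformly integrable, so along a subsequence they converge in $\sigma(L^1,L^\infty)$; this yields convergence of $\int g\,\mathtt{d}\mathsf{P}^n$ for every bounded \emph{measurable} $g$, in particular for $g=f\circ X_t$ and $g=f\circ(X_0,X_1)$, and the constraint set is then trivially closed. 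The paper does not address this point at all, so you are already being more careful than the authors.
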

Thus, in relation with \cref{th:hjb}, we have to answer two questions:
\begin{enumerate}[(A)]
  \item\label{question-existence} Can we find a candidate $\mathsf{P} \in \mathscr{P}(\Omega)$ for \cref{eq:BS} with finite entropy?
  \item\label{question-regularity} Is the unique solution to \cref{eq:BS} regular?
\end{enumerate}

\cite[Prop.\ 6.1]{ACLZ} gives a positive answer to \ref{question-existence}, when $\mathsf{R}$ is the reversible Brownian motion on the torus, $\mu_{t} = \mathsf{vol}$ for all $t \in [0,1]$ (incompressible case), and $\pi$ is any coupling satisfying $\mathcal{H}(\pi \given \mathsf{R}_{01}) < \infty$.
\cite{GarciaZeladaHuguet} studies the case of the reflected Brownian measure on some quotient spaces, and of a non-reversible Brownian measure in $\mathbb{R}^{n}$.
In the latter case, the incompressible condition translates to Gaussian marginal constraints.
All the results motioned above, the necessary and sufficient condition for existence is $\mathcal{H}(\pi|\mathsf{vol}^{2}) < +\infty$.
In the Ornstein-Uhlenbeck case, we only manage to prove existence when $\pi$ is a Gaussian of a certain form.

In this section, we obtain a positive answer to \cref{question-existence} when $\mathsf{R}$ is a reversible Brownian Ornstein--Uhlenbeck process on $\mathbb{R}$.
In the language of semimartingales , $\mathsf{R}$ as characteristics $(-X_t,\sqrt{2}, 0)$, and is started from a Gaussian distribution.
In terms of stochastic differential equations, under $\mathsf{R}$, the canonical process $X$ satisfies 
\begin{equation*}
  \begin{cases}
  & \mathtt{d} X_t = \sqrt{2}\mathtt{d} W_t -X_t \mathtt{d} t,
\\& X_{0} \sim \gamma \coloneq \mathcal{N}(0,1),
  \end{cases}
\end{equation*}
where $W$ is an Brownian motion, under $\mathsf{R}$, independent of $X_0$.
The measure $\gamma$ is the unique invariant measure of this process, thus we study \cref{eq:BS} under the natural incompressible condition $\mu_{t} \coloneq \gamma$ for all $t \in [0,1]$.
Hence, we consider the minimisation problem
Namely, our minimisation problem in this specific case only depends on the parameter $\pi \in \mathscr{P}(\mathbb{R}^{n} \times \mathbb{R^{n}})$, and reads
\begin{equation}\label{eq:BS-OU}
  \Inf*{ \mathcal{H}(\mathsf{P} \given \mathsf{R}) : \mathsf{P} \in \mathscr{P}(\Omega), \mathsf{P}_{t} = \gamma \, \forall t \in [0,1], \mathsf{P}_{01} = \pi }.
\end{equation}
For $|c| \leq 1$, let us write
\begin{equation*}
  \gamma_{c} \coloneq \mathcal{N}\paren*{0,\begin{pmatrix}1&c\\c&1\end{pmatrix}}
\end{equation*}
Our results in this case is as follows.
\begin{proposition}\label{th:unique-solution-ou}
  Let $\pi \coloneq \gamma_{c}$ with $4e^{-1}-3e^{-1/3}\leq c<1$.
Then, the problem \eqref{eq:BS-OU} admits a unique solution.
\end{proposition}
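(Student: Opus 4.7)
By the preceding lemma, it suffices to exhibit some $\mathsf{P} \in \mathscr{P}(\Omega)$ satisfying the marginal constraints of \eqref{eq:BS-OU} and with $\mathcal{H}(\mathsf{P} \given \mathsf{R}) < \infty$; uniqueness of the minimizer will then be automatic. The plan is to piggyback on the reference measure. Since $\mathsf{R}$ is stationary reversible Ornstein--Uhlenbeck, the two-time marginal is $\mathsf{R}_{st} = \gamma_{e^{-|t-s|}}$ and the conditional law of $(X_u)_{s \leq u \leq t}$ given $(X_s, X_t)$ is the Ornstein--Uhlenbeck bridge. I would subdivide $[0,1]$ into three equal pieces with nodes $t_i \coloneq i/3$ and look for $\mathsf{P}$ whose conditional laws on each $[t_i, t_{i+1}]$ coincide with those of $\mathsf{R}$ but whose joint law of the four nodes is a different centered Gaussian $\nu$ on $\mathbb{R}^4$, tailored to enforce $\mathsf{P}_{01} = \gamma_c$.

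Concretely, set $r \coloneq e^{-1/3}$ and, for parameters $a, b \in \mathbb{R}$ to be chosen, take $\nu = \mathcal{N}(0, \Sigma(a,b))$ with
\[
\Sigma(a,b) \coloneq \begin{pmatrix} 1 & r & a & c \\ r & 1 & r & b \\ a & r & 1 & r \\ c & b & r & 1 \end{pmatrix}.
\]
Define $\mathsf{P}$ by sampling the four node values from $\nu$ and, conditionally on them, drawing the path on each $[t_i, t_{i+1}]$ as an independent Ornstein--Uhlenbeck bridge. By construction $\mathsf{P}_{01} = \gamma_c = \pi$. Moreover, each two-time marginal $\mathsf{P}_{t_i t_{i+1}} = \gamma_r$ equals $\mathsf{R}_{t_i t_{i+1}}$, and since the conditional laws also agree, the restrictions of $\mathsf{P}$ and $\mathsf{R}$ to every $[t_i, t_{i+1}]$ have the same distribution; in particular $\mathsf{P}_t = \gamma$ for every $t \in [0,1]$. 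The chain rule for the relative entropy, combined with this matching of conditional laws, then gives $\mathcal{H}(\mathsf{P} \given \mathsf{R}) = \mathcal{H}(\nu \given \mathsf{R}_{t_0 t_1 t_2 t_3})$, a relative entropy between two Gaussians on $\mathbb{R}^4$, which is finite whenever $\Sigma(a,b)$ is strictly positive definite.

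The main technical step is therefore to show that, for every $c \in [4 e^{-1} - 3 e^{-1/3}, 1)$, one can choose $a, b$ so that $\Sigma(a,b) \succ 0$. Via the Gram-matrix interpretation, this amounts to producing four unit vectors $v_0, v_1, v_2, v_3$ in a Hilbert space with $\langle v_i, v_{i+1} \rangle = r$ and $\langle v_0, v_3 \rangle = c$. Writing $v_{i+1} = r v_i + \sqrt{1 - r^2}\, w_{i+1}$ with $w_{i+1}$ a unit vector orthogonal to $v_i$, a two-step optimisation shows that $\langle v_0, v_2 \rangle$ ranges over $[2 r^2 - 1, 1]$, and that $\langle v_0, v_3 \rangle$ then ranges over $[4 r^3 - 3 r, 1]$. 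The lower endpoint is the Chebyshev value $T_3(r) = 4 e^{-1} - 3 e^{-1/3}$, attained only by planar configurations for which $\Sigma$ is merely semidefinite; strict positive definiteness holds throughout the open interior, and the boundary case can be recovered by running the same construction on a finer subdivision into $n \geq 4$ equal pieces, which further lowers the Chebyshev threshold $T_n(e^{-1/n})$ so that the boundary value becomes interior. This geometric step---the Gram-matrix / PSD-completion computation for the chordless $4$-cycle---is the most delicate point; the rest of the argument is Gaussian bookkeeping via the chain rule.
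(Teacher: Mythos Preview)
Your proposal is correct and follows essentially the same route as the paper: build a candidate by gluing Ornstein--Uhlenbeck bridges at the nodes $0,\tfrac13,\tfrac23,1$ with a centred Gaussian law on the four node values, check the marginal constraints via the fact that each pair of consecutive nodes has the ``right'' correlation $r=e^{-1/3}$, and compute the entropy via the chain rule. The paper does exactly this, with two small differences worth noting.

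First, the paper restricts to a one-parameter Toeplitz ansatz (your $a=b=s$) and establishes positive definiteness by computing the eigenvalues explicitly, arriving at the condition $4r^{3}-3r<c<1$. Your Gram-matrix argument is more conceptual: it identifies the admissible range of $\langle v_{0},v_{3}\rangle$ for a chain of unit vectors with consecutive inner products $r$, and recognises the lower endpoint as the Chebyshev value $T_{3}(r)$. Both give the same open interval; the extra freedom in $(a,b)$ does not enlarge it.

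Second, your treatment of the closed endpoint $c=4e^{-1}-3e^{-1/3}$ via a finer subdivision is a genuine addition. The paper's eigenvalue computation only yields the strict inequality $c>4r^{3}-3r$, so it does not literally cover the boundary value stated in the proposition. Your observation that passing to $n=4$ subintervals lowers the threshold to $T_{4}(e^{-1/4})=8e^{-1}-8e^{-1/2}+1$, which is strictly smaller than $T_{3}(e^{-1/3})$, cleanly repairs this. You should, however, state that numerical comparison (or a short monotonicity argument) explicitly rather than asserting it.
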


\begin{remark}
  Since $\mathsf{P}_{0} = \mathsf{P}_{1} = \gamma$, we necessarily have that the variance under $\pi$ is $1$.
  Also if $|c| \coloneq 1$, then $\pi$ is degenerated and $\mathcal{H}(\pi|\mathsf{R}_{01})$ is not finite and so the problem would not have any solution.
  Lastly, for the particular $c \coloneq e^{-1}$, since $\pi=\mathsf{R}_{01}$, the problem admits the trivial solution $P=R$.
\end{remark}

As in \cite{ACLZ,GarciaZeladaHuguet}, we create candidate path measures as mixture of $\mathsf{R}$-bridges.
In the setting of the Ornstein-Uhlenbeck process, we exploit the following  explicit representation for the bridge
\begin{equation*}
  \mathsf{R}^{xy} \coloneq \mathsf{R}(\cdot \given X_{0} = x, X_{T} = y), \qquad x,y \in \mathbb{R},\, T \in [0,1].
\end{equation*}

\begin{lemma}[{\cite{BarczyKern}}]\label{th:bridge-representation-ou}
  The Ornstein--Uhlenbeck bridge $\mathsf{R}^{xy}$ coincides with the law of the process
\begin{equation*}
  U^{x,y}_t \coloneq \frac{\sinh(T-t)}{\sinh(T)}x +\frac{\sinh(t)}{\sinh(T)}y + \sqrt{2}\int_0^t\frac{\sinh(T-t)}{\sinh(T-s)}\, \mathtt{d} W_s,
\end{equation*}
where $W$ is a standard Brownian motion.
In particular,
\begin{equation*}
  \mathsf{R}^{xy}_{t} = \mathcal{N} \paren*{ \frac{\sinh(T-t)}{\sinh(T)}x +\frac{\sinh(t)}{\sinh(T)}y, 2\frac{\sinh(T-t)\sinh(t)}{\sinh(T)} }.
\end{equation*}
\end{lemma}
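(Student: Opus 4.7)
The plan is to identify two continuous Gaussian processes by matching their mean and covariance functions. On the one hand, under $\mathsf{R}$ the canonical process $X$ solves a linear SDE, so its finite-dimensional marginals are Gaussian; since $(X_t, X_0, X_T)$ is jointly Gaussian, the conditional law $\mathsf{R}^{xy} = \mathsf{R}(\cdot \mid X_0 = x, X_T = y)$ is a Gaussian process with deterministic mean. On the other hand, $U^{x,y}$ is a deterministic function plus a Wiener integral with a deterministic integrand, hence Gaussian. Both path laws are concentrated on continuous trajectories, so it suffices to match mean and covariance functions on $[0, T]$.

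For the bridge, from the explicit OU solution $X_t = \mathrm{e}^{-t} X_0 + \sqrt{2}\int_0^t \mathrm{e}^{-(t-s)}\,\mathtt{d} W_s$ one reads, conditionally on $X_0 = x$, the mean $\mathrm{e}^{-t}x$ and covariance $\operatorname{Cov}(X_s, X_t \mid X_0 = x) = \mathrm{e}^{-(t-s)}(1 - \mathrm{e}^{-2s})$ for $s \leq t$. Applying the Gaussian conditioning formula with respect to $X_T = y$ and simplifying with $1 - \mathrm{e}^{-2u} = 2\,\mathrm{e}^{-u}\sinh(u)$ and the key identity $\mathrm{e}^{-a}\sinh(b) - \mathrm{e}^{-b}\sinh(a) = \sinh(b - a)$ yields the conditional mean $\frac{\sinh(T-t)}{\sinh(T)}\,x + \frac{\sinh(t)}{\sinh(T)}\,y$ together with $\operatorname{Cov}_{\mathsf{R}^{xy}}(X_s, X_t) = \frac{2\sinh(s)\sinh(T-t)}{\sinh(T)}$ for $s \leq t$.

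For $U^{x,y}$, the mean is read off the definition. Writing $U^{x,y}_t = m(t) + \sinh(T - t)\, M_t$ with $M_t \coloneq \sqrt{2}\int_0^t \sinh(T-s)^{-1}\,\mathtt{d} W_s$ a continuous martingale, It\^o isometry gives
\begin{equation*}
  \operatorname{Cov}(U^{x,y}_s, U^{x,y}_t) = 2 \sinh(T-s)\sinh(T-t) \int_0^s \frac{\mathtt{d} r}{\sinh^2(T-r)}, \qquad s \leq t,
\end{equation*}
and the primitive $\int \sinh^{-2}(u)\,\mathtt{d} u = -\coth(u) + C$ together with the identity above evaluates the remaining integral to $\sinh(s)/(\sinh(T-s)\sinh(T))$, which after cancellation yields exactly the bridge covariance. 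The marginal formula in the statement is then the special case $s = t$.

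The main obstacle is not conceptual but organizational: carrying out the hyperbolic-function bookkeeping without slips. All the essential simplifications reduce to the single identity $\sinh(b - a) = \sinh(b)\cosh(a) - \cosh(b)\sinh(a)$ applied in several disguises, so once this is set up systematically the verification is routine.
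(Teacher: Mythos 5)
The paper does not give its own proof of this lemma; it is imported directly from the cited reference \cite{BarczyKern}, so there is nothing in the text to compare against. Your argument is nonetheless correct and complete: both $\mathsf{R}^{xy}$ and the law of $U^{x,y}$ are Gaussian measures on continuous paths, so identifying mean and covariance suffices, and your hyperbolic-function computations check out. In particular, for $s\le t$ the bridge covariance
\begin{equation*}
  \operatorname{Cov}_{\mathsf{R}^{xy}}(X_s,X_t)
  = e^{-(t-s)}(1-e^{-2s})-\frac{e^{-(T-s)}(1-e^{-2s})\,e^{-(T-t)}(1-e^{-2t})}{1-e^{-2T}}
  = \frac{2\sinh(s)\sinh(T-t)}{\sinh(T)}
\end{equation*}
follows from $1-e^{-2u}=2e^{-u}\sinh(u)$ and $e^{-a}\sinh(b)-e^{-b}\sinh(a)=\sinh(b-a)$, and the Wiener-integral covariance of $U^{x,y}$ reduces to the same expression via $\int_0^s \sinh^{-2}(T-r)\,\mathtt{d} r=\coth(T-s)-\coth(T)=\sinh(s)/\bigl(\sinh(T-s)\sinh(T)\bigr)$. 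Taking $s=t$ gives the stated marginal. This is precisely the standard Gaussian-conditioning route one would expect the cited reference to follow, so while I cannot compare against an in-paper proof, your proposal is a sound self-contained justification of the lemma.
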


Let $T>0$ and $\sigma\in\mathscr{P}(\mathbb{R}^2)$.
We define
\begin{equation}\label{eq:bridge-mixture-2}
  \mathsf{Q} \coloneq \int_{\mathbb{R}^2} \mathsf{R}^{xy} \sigma(\mathtt{d}x\mathtt{d}y).
\end{equation}
The path measure $\mathsf{Q}$ is a mixture of Ornstein--Uhlenbeck bridges.
\begin{remark}
  For Brownian bridges, \cite{ACLZ,GarciaZeladaHuguet} can choose for $\sigma$ a product measure such that the mixture $\mathsf{Q}$ satisfies the incompressibility condition, that is $\mathsf{Q}_{t} = \mathsf{vol}$ or $\mathsf{Q}_{t} = \mathcal{N}(0,1/4)$, for all $t \in [0,1]$.
  For Ornstein-Uhlenbeck bridges, choosing $\sigma$ as a product \emph{cannot} yield an invariant process $\mathsf{Q}$.
  This explains why we need to introduce correlations, and why we are this limited to Gaussian couplings for $\pi$. 
\end{remark}

\begin{lemma}\label{th:invariant-bridge}
  Consider the bridge mixture $\mathsf{Q}$ as defined in \cref{eq:bridge-mixture-2}, with $\sigma \coloneq \gamma_{\rho}$ for some $\abs{\rho} < 1$.
  Then, $\mathsf{Q}_{t} = \gamma$ for all $0\leq t\leq T$ if and only if $\rho= \mathrm{e}^{-T}$.
\end{lemma}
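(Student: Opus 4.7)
The plan is to reduce the statement to a computation of mean and variance, using that all distributions in sight are Gaussian. By \cref{th:bridge-representation-ou}, for fixed $x,y$ the random variable $X_t$ under $\mathsf{R}^{xy}$ is Gaussian with mean $\frac{\sinh(T-t)}{\sinh(T)}x + \frac{\sinh(t)}{\sinh(T)} y$ and variance $2\sinh(T-t)\sinh(t)/\sinh(T)$. Since $\sigma = \gamma_\rho$ is itself Gaussian, $\mathsf{Q}_t$ is the law of
\begin{equation*}
  \frac{\sinh(T-t)}{\sinh(T)}\, X_0 + \frac{\sinh(t)}{\sinh(T)}\, X_T + G_t,
\end{equation*}
where $(X_0, X_T) \sim \gamma_\rho$ and $G_t$ is a centered Gaussian of variance $2\sinh(T-t)\sinh(t)/\sinh(T)$ independent of $(X_0, X_T)$. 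In particular $\mathsf{Q}_t$ is a centered Gaussian, so the condition $\mathsf{Q}_t = \gamma$ reduces to the single scalar identity $\var_{\mathsf{Q}}(X_t) = 1$ for every $t \in [0,T]$.

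First I would compute this variance directly:
\begin{equation*}
  \var_{\mathsf{Q}}(X_t) = \frac{\sinh^2(T-t) + \sinh^2(t) + 2\rho\, \sinh(T-t)\sinh(t)}{\sinh^2(T)} + \frac{2\sinh(T-t)\sinh(t)}{\sinh(T)}.
\end{equation*}
To get rid of the $t$-dependence I would apply the standard hyperbolic identities $\sinh^2(T-t) + \sinh^2(t) = \cosh(T)\cosh(T-2t) - 1$ and $2\sinh(T-t)\sinh(t) = \cosh(T) - \cosh(T-2t)$, together with $\sinh^2(T) = \cosh^2(T) - 1$. Substituting, clearing $\sinh^2(T)$, and collecting the coefficient of $\cosh(T-2t)$, the equation $\var_{\mathsf{Q}}(X_t) = 1$ rearranges cleanly into
\begin{equation*}
  \bigl[\cosh(T-2t) - \cosh(T)\bigr]\, \bigl[\cosh(T) - \sinh(T) - \rho\bigr] = 0, \qquad t \in [0,T].
\end{equation*}

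Since $\cosh(T-2t) \neq \cosh(T)$ on the open interval $(0,T)$, the first factor is nonzero, so the second factor must vanish, giving $\rho = \cosh(T) - \sinh(T) = \mathrm{e}^{-T}$; this proves the forward direction. Conversely, for $\rho = \mathrm{e}^{-T}$ the same algebraic manipulation yields $\var_{\mathsf{Q}}(X_t) = 1$ for every $t \in [0,T]$, and combined with the centering gives $\mathsf{Q}_t = \gamma$. I do not anticipate any serious obstacle: once the explicit bridge representation is plugged in, everything reduces to Gaussian bookkeeping, the only mild subtlety being the choice of hyperbolic identities that collapse the $t$-dependence into the single factor $\cosh(T-2t) - \cosh(T)$.
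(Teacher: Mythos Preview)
Your proof is correct and follows essentially the same route as the paper's: both reduce $\mathsf{Q}_t=\gamma$ to the scalar condition $\var_{\mathsf{Q}}(X_t)=1$, compute that variance from \cref{th:bridge-representation-ou}, and use hyperbolic identities to isolate $\rho=\cosh(T)-\sinh(T)=\mathrm{e}^{-T}$. The only cosmetic difference is that the paper dispatches the ``if'' direction by noting that $\gamma_{\mathrm{e}^{-T}}=\mathsf{R}_{0T}$, so the mixture equals $\mathsf{R}$ itself, whereas you obtain both directions from your factorization $[\cosh(T-2t)-\cosh(T)][\cosh(T)-\sinh(T)-\rho]=0$; either way the substance is identical.
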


\begin{proof}
  If $\rho=e^{-T}$, then $\sigma=\mathsf{R}_{0T}$ and $\mathsf{Q}=\mathsf{R}$.
  Let us show that it is the only possible $\rho$.
  Let $0\leq t\leq T$.
  According to \cref{th:bridge-representation-ou} and the definition of $\mathsf{Q}$, $\mathsf{Q}_t$ is the law of
\begin{equation*}
\frac{\sinh(T-t)}{\sinh(T)}X +\frac{\sinh(t)}{\sinh(T)}Z + \sqrt{\frac{\sinh(T-t)\sinh(t)}{\sinh(T)}}W,
\end{equation*}
where $(X,Z) \sim \gamma_{\rho}$ and $W$ is an independent standard Gaussian random variable.
In particular, we find for the variance
\begin{equation*}
\Var*{Q_t} = \frac{\sinh^2(T-t)}{\sinh^2(T)} +2\rho\frac{\sinh(T-t)\sinh(t)}{\sinh^2(T)}+\frac{\sinh^2(t)}{\sinh^2(T)} + 2\frac{\sinh(T-t)\sinh(t)}{\sinh(T)}.
\end{equation*}

So, the variance is constant and equals $1$ if and only if for all $0<t<T$, we have
\begin{equation*}
2\rho\sinh(T-t)\sinh(t)
=\sinh^2(T) -\sinh^2(T_t)-\sinh^2(t)-2\sinh(T)\sinh(T-t)\sinh(t).
\end{equation*}
By direct computations, the right-hand side becomes
\begin{equation*}
2\sinh(T-t)\sinh(t)(\cosh(T)-\sinh(T)).
\end{equation*}
Thus, the variance is constant and equals to $1$ if and only if $\rho = \cosh(T) - \sinh(T) = \mathrm{e}^{-T}$.
\end{proof}

\begin{proof}[Proof of {\cref{th:unique-solution-ou}}]
Actually, we need to concatenate several bridges in order to conclude.
In this way, we obtain a free parameter for us to optimise.
We let $r \coloneq e^{-1/3}$, and $s \in \mathbb{R}$ to be chosen later.
Let $\sigma\in\mathscr{P}(\mathbb{R}^4)$ be the centred Gaussian law with covariance
\begin{equation*}
  C \coloneq \begin{pmatrix} 1&r&s&c\\ r&1&r&s\\ s&r&1&r\\ c&s&r&1\\ \end{pmatrix},
\end{equation*}
and $\mathsf{Q} \in \mathscr{P}(\Omega)$ defined by
\begin{equation*}
  \mathsf{Q} \coloneq \int_{\mathbb{R}^3} \mathsf{R}(\cdot \given X_0=x, X_{1/3}=u, X_{2/3}=v, X_1=y)\sigma(\mathtt{d}x\mathtt{d}u\mathtt{d}v\mathtt{d}y).
\end{equation*}

\subsubsection*{The measure $Q$ has finite relative entropy.}
By the chain rule for the entropy \cref{eq:chain-rule-entropy}, we have 
\begin{equation*}
  \mathcal{H}(\mathsf{Q} \given \mathsf{R}) = \mathcal{H}(\pi \given \mathsf{R}_{01}) + \int \mathcal{H}(\sigma^{xy}|\mathsf{R}^{xy}_{1/3,2/3})\pi(\mathtt{d}x\mathtt{d}y).
\end{equation*}
Since, $\pi$ and $\mathsf{R}_{01}$ on the one hand, and $\sigma^{xy}$ and $\mathsf{R}^{xy}_{1/3,2/3}$ on the other hand, are non-degenerated Gaussian laws, their relative entropies are finite.
Furthermore, $\mathcal{H}(\sigma^{xy} \given \mathsf{R}^{xy}_{1/3,2/3})$ is a quadratic polynomial in $x$ and $y$.
Thus it is integrable with respect to the Gaussian measure $\pi$.
\subsubsection*{The measure $\mathsf{Q}$ satisfies the marginal conditions}
By construction we have that $\mathsf{Q}_{01}=\pi$.
Let $0<t<1$.
Since $\mathsf{R}$ is a reciprocal measure, whenever $h\in\{0, 1/3, 2/3\}$
\begin{equation*}
  \mathsf{Q}_t = \int \mathsf{R}_t(\cdot \given X_h=x, X_{h+1/3}=y) \gamma_{r}(\mathtt{d}x\mathtt{d}y),
\end{equation*}
Hence, using \cref{th:invariant-bridge}, we have $\mathsf{Q}_t = \gamma$.

\subsubsection*{Handling the parameters}
To conclude, let us derive conditions on $s$ and $c$, under which $C$ is a covariance matrix, that is positive definite.
Since $C$ is a Toeplitz matrix, its eigenvalues are
\begin{align*}
  & \frac{1}{2}(c+r+2\pm\sqrt{c^2-2cr+5r^2+8rs+4s^2}),
\\& \frac{1}{2}(-c-r+2\pm\sqrt{c^2-2cr+5r^2-8rs+4s^2}).
\end{align*}
Thus, $C$ is a covariance matrix if and only if
\begin{equation*}
\frac{s^2+2rs+r^2-r-1}{r+1}<c<\frac{s^2-2rs+r^2+r-1}{r-1}.
\end{equation*}
These two inequalities have solutions if and only if $s \in (2r^2-1, 1)$.
Then for each $r(4r^2-3)<c<1$, there exists $s\in (2r^2-1, 1)$ such that $\Gamma$ is the covariance matrix of a non-degenerated Gaussian measure.
This proves the existence of a unique solution. 
\end{proof}

\begin{remark}
Our candidate measure $Q$ is slightly more involved than the one from \cite{ACLZ}, where the bridge is only conditioned at the time $1/2$.
In our case, their approach would only prove the existence of solutions for $2e^{-1}-1\leq c<1$.
Conditioning at times $1/3$ and $2/3$ gives more flexibility, thanks to the free parameter $s$.
\end{remark}

\printbibliography%
\end{document}